    \newcolumntype{L}{>{$}l<{$}} 
\newtheorem{theorem}{Theorem}[section]
\newtheorem{proposition}[theorem]{Proposition}
\newtheorem{lemma}[theorem]{Lemma}
\theoremstyle{definition}
\theoremstyle{remark}
\newcommand{\ind}{\mathbbm{1}}
\newcommand{\bxi}{\boldsymbol{\xi}}
\newcommand{\calP}{\mathcal{P}}
\newcommand{\calM}{\mathcal{M}}
\newcommand{\calQ}{\mathcal{Q}}
\newcommand{\FF}{\mathbb{F}}
\newcommand{\EE}{\mathbb{E}}
\newcommand{\ZZ}{\mathbb{Z}}
\newcommand{\CC}{\mathbb{C}}
\newcommand{\PP}{\mathbb{P}}
\newcommand{\MM}{\mathcal{M}}
\newcommand{\bF}{\boldsymbol{F}}
\newcommand{\bG}{\boldsymbol{G}}
\newcommand{\bH}{\boldsymbol{H}}
\newcommand{\be}{e_{\calP}}
\newcommand{\sumP}[1]{\sum_{\boldsymbol{#1} \in \mathcal{M}_{\calP,n}}}
\newcommand{\ps}[1]{\left( #1 \right)}
\DeclareMathOperator{\disc}{\mathrm{disc}}
\DeclareMathOperator{\res}{\mathrm{res}}
\title{Full Galois groups of polynomials with slowly growing coefficients}
\author{Lior Bary-Soroker}
\address{School of Mathematical Sciences, Tel Aviv University, Tel Aviv 69978, Israel}
\email{barylior@tauex.tau.ac.il}
\author{Noam Goldgraber}
\address{School of Mathematical Sciences, Tel Aviv University, Tel Aviv 69978, Israel}
\email{noam3goldgraber@gmail.com}
\date{\today}
\begin{document}

\begin{abstract}
    Choose a polynomial $f$ uniformly at random from the set of all monic polynomials of degree $n$ with integer coefficients in the box $[-L,L]^n$. The main result of the paper asserts that if $L=L(n)$ grows to infinity, then the Galois group of $f$ is the full symmetric group, asymptotically almost surely, as $n\to \infty$. 
    
    When $L$ grows rapidly to infinity, say $L>n^7$, this theorem follows from a result of Gallagher. When $L$ is bounded, the analog of the theorem is open, while  the state-of-the-art is that the Galois group is large in the sense that it contains the alternating group (if $L< 17$, it is conditional on the Extended Riemann Hypothesis). Hence the most interesting case of the theorem is when $L$ grows slowly to infinity.

    Our method works for more general independent coefficients.
\end{abstract}

\maketitle

\section{Introduction}
The study of the Galois group of a random polynomial is going back to the foundational works of Hilbert \cite{Hilbert} and van der Waerden \cite{VanDerWaerden}. Expressed in terms of probability theory, they proved that if we uniformly choose at random a monic polynomial $f$ of degree $n$ whose coefficients are within the box $([-L,L]\cap \mathbb{Z})^n$, then its Galois group $G_f$ is the full symmetric group, almost surely as $L\to \infty$ and $n$ is fixed; i.e.\ $\displaystyle\lim_{L\to \infty} \PP(G_f=S_n)=1$. We call this model the \emph{large box model}.

Van der Waerden conjectured that the second most probable group in the large box model is $S_{n-1}$ coming from polynomials having a rational root. Chela \cite[Theorem~1]{Chela} computed that $\PP(G_f=S_{n-1})\sim \frac{c_n}{L}$, as $L\to \infty$ and $n>2$, where  $c_n=\Theta(2^n)$ is an explicit constant.  Thus, the van der Waerden conjecture may be stated as $\PP(G_f\neq S_{n})\sim \frac{c_n}{L}$, $L\to \infty$.   A slightly weaker version of the conjecture is that $\PP(G_f
\neq S_n) = O(L^{-1})$, as $n$ is fixed and $L\to\infty$. The weaker version is occasionally also referred to as the van der Waerden conjecture. 

After almost of a century of progress \cite{Andersonetal,ChowDietmann,ChowDietmann2,Dietmann_2011,Gallagher_1976,Knobloch,VanDerWaerden}, the latter bound was established by Bhargava \cite{Bhargava}.
The main challenge in the above results is to bound $\PP(G_f=A_n)$ from above, in particular, a key result in \cite{Bhargava} is that 
\[
    \mathbb{P}(G_f= A_n)=O(L^{-1}), \qquad L\to\infty.
\]
This is the state-of-the-art, even though it is believed to be not sharp, see \cite[Conjecture 1.1]{bary2022probabilistic}.

In the large box model, the degree $n=\deg f$ is fixed. In particular, the rate of convergence and the implied constants depend on $n$. Indeed, in the majority of the results mentioned above, the implied constant is at least exponential, sometimes super-exponential in $n$, or not given explicitly. A notable exception is the method of  Gallagher \cite{Gallagher_1976} that is based on the large sieve and gives a polynomial dependence on $n$: 
\begin{equation}\label{GallagherThm}
    \mathbb{P}(G_f \neq S_n)\ll \frac{n^3 \log L}{\sqrt{L}},
\end{equation}
where the implied constant is absolute, see also \cite[Theorem 4.2]{Kowalski_2008}.

Random polynomials are central in probability theory. One of the most  natural and well-studied models are when the coefficients are sampled independently and the degree goes to infinity. This model  goes back to the seminal works of Bloch-P\'olya \cite{BlochPolya}, Littlewood-Offord \cite{LittlewoodOfford}, Kac \cite{Kac}, and Erd\H{o}s-T\'uran \cite{ErdosTuran}. As an example model, take $f$ as previously defined, but with $L\geq 1$ fixed and with $n\to \infty$. We call this model the \emph{restricted box model}. 

It is a folklore conjecture that in the restricted box model, $f$ is irreducible and $G_f=S_n$ asymptotically almost surely, as $n\to \infty$ (conditioning on $f(0)\neq 0$, of course), cf.\ \cite{OdlyzkoPoonen}. Recently there has been progress on this problem based on the methods developed by Konyagin \cite{Konyagin}:
Bary-Soroker and Kozma \cite{Bary_Soroker_2020} proved that if the length of the interval is divisible by at least 4 distinct primes, then $f$ is irreducible and $A_n\leq G_f$ asymptotically almost surely, provided $f(0)\neq 0$. Breuillard and Varj\'u \cite{Breuillard_2019} proved the same for any $L\geq 1$ assuming the Riemann hypothesis for Dedeking zeta-functions for a certain family of number fields. In fact, they consider a  more general model, where the coefficient are i.i.d.\ with an arbitrary finite support law of distribution. Bary-Soroker, Koukoulopoulos, and Kozma \cite{bary2023irreducibility}  also deal with general measures. Their results are unconditional, and the coefficients are not required to be identical. In particular, in the restricted box model they  prove that $\PP(A_n\leq G_f|f(0)\neq 0)\to 1$, $n\to \infty$ if the interval is of length $\geq 35$ and $\liminf_{n\to \infty} \PP(A_n\leq G_f)>0$ if the length of the interval is between  $2$ to $34$, see \cite[Theorem 6]{bary2023irreducibility}.

As in the large box model, the most challenging case is $G_f=A_n$. Unlike the large box model, in the restricted box model none of the results give that $\mathbb{P}(G_f=A_n)\to 0$, hence it is open whether $\PP(G_f=S_n)\to 1$ as $n\to \infty$. 

The goal of this paper is to get as close as we can to the restricted box model. We get that $G_f=S_n$ asymptotically almost surely, as $L \to \infty$, uniformly in $n$. In particular, $L$ may grow arbitrarily slowly with respect to $n$.

\begin{theorem}\label{thm:Gal_Sn}
    Let $L$ and $n$ be positive integers
    and let
    \begin{equation*}
        f = X^n + \sum_{k=0}^{n-1} \zeta_k X^k
    \end{equation*}
    be a random polynomial, where $\zeta_k$ are chosen independently and identically distributed, taking values uniformly in  $[-L,L]\cap \mathbb{Z}$. Then, 
    \[
        \lim_{L\to \infty}\mathbb{P}(G_f=S_n) = 1,
    \]
    uniformly in $n$.
\end{theorem}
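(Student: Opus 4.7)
The plan is to partition by the relative size of $L$ and $n$ and handle each regime separately. In the fast-growth regime $L \ge n^7$, Gallagher's estimate \eqref{GallagherThm} yields
\[
    \PP(G_f \neq S_n) \ll \frac{n^3 \log L}{\sqrt L} \le L^{-1/14} \log L \longrightarrow 0
\]
uniformly in this range, so it suffices to treat the slow-growth regime $L \le n^7$, in which $n\to\infty$ whenever $L\to\infty$.

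In the slow-growth regime, decompose $\{G_f \neq S_n\} = \{G_f \not\supseteq A_n\} \cup \{G_f = A_n\}$ and bound each piece. The first event is controlled by \cite[Theorem~6]{bary2023irreducibility}: once $L \ge 17$, $\PP(A_n \le G_f \mid f(0)\neq 0) = 1 - o_n(1)$, and combined with $\PP(f(0)=0) = O(1/L) = o(1)$, this yields $\PP(G_f \not\supseteq A_n) = o(1)$ uniformly as $L \to \infty$ in our regime. If the referenced result is not stated with explicit $L$-uniformity, a quantitative version should be extractable from its proof, since the underlying sieve only improves with growing $L$.

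The main new ingredient is showing $\PP(G_f = A_n) = o_L(1)$, uniformly in $n$. The containment $G_f \subseteq A_n$ forces $\disc(f)$ to be a perfect square in $\ZZ$ (for $f$ squarefree, which holds with probability $1-o(1)$), hence $\chi_p(\disc(f)) \in \{0, +1\}$ for every odd prime $p$, where $\chi_p$ denotes the Legendre symbol. Equivalently, by the Stickelberger-type identity $\chi_p(\disc(f)) = (-1)^{n - r(f \bmod p)}$ (with $r$ the number of distinct irreducible factors), the cycle type of $f \bmod p$ corresponds to an even permutation whenever $p \nmid \disc(f)$. To contradict this, fix a slowly growing family of small odd primes $\calP = \{p_1, \ldots, p_k\}$ with $k = k(L) \to \infty$ and $P := \prod_{p\in\calP} p \le \sqrt L$ (for instance $k \asymp \log L / \log\log L$), and combine two facts:
\begin{enumerate}
\item[(a)] by Chinese Remainder together with the $O(p/L)$-equidistribution of $[-L,L]\cap\ZZ$ modulo $p$, the joint reduction $(f \bmod p)_{p \in \calP}$ is within total variation $O(P/L)$ of a product of independent uniform monic polynomials of degree $n$ over each $\FF_p$;
\item[(b)] for each odd prime $p$ and each $n \ge 2$, the probability that a uniform monic polynomial of degree $n$ over $\FF_p$ has $\chi_p(\disc) = -1$ is $\tfrac12(1 - 1/p) + o_n(1)$, in particular bounded below by an absolute constant $c > 0$ uniformly in $p$. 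This follows from the exact cancellation $\EE[(-1)^{n-r}] = 0$ for the number $r$ of cycles of a uniform permutation in $S_n$ ($n \ge 2$), together with the standard comparison (via generating-function identities or Weil-type character sum bounds) between factorization statistics of random polynomials over $\FF_p$ and cycle statistics of random permutations in $S_n$.
\end{enumerate}
Putting (a) and (b) together yields $\PP(G_f = A_n) \le (1 - c)^k + O(P/L) = o_L(1)$.

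The principal obstacle is ingredient (b): establishing the uniform-in-$p$ lower bound $c > 0$ with effective error terms valid for all large $n$, and in particular when $p$ is small relative to $n$, requires careful handling of the comparison between polynomial and permutation statistics over $\FF_p$. A secondary issue is ensuring that the $A_n$-containment input is uniform in $L$ throughout the regime $17 \le L \le n^7$; if \cite[Theorem~6]{bary2023irreducibility} does not directly yield this, it should follow from inspection of its proof.
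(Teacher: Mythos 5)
Your overall strategy is the right one and matches the paper's outline: split into $L\geq n^7$ (Gallagher) versus $L<n^7$, and in the latter regime decompose into $\{A_n\not\leq G_f\}$ (handled by \cite{bary2023irreducibility}, where the paper actually invokes their Theorem~8 to get an effective $O(n^{-c})$ bound) and $\{G_f=A_n\}$, attacked via the Stickelberger--Swan parity $\chi_p(\disc F)=(-1)^{\deg F}\mu(F)$ over a slowly growing set of small primes $\calP$. But the execution of the $\{G_f=A_n\}$ part has a gap that is fatal, and you have misidentified where it is.

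The critical error is in ingredient (a). The total variation distance between the joint law of $(f\bmod p)_{p\in\calP}$ and the product-uniform law on $\prod_p\MM_{p,n}$ is \emph{not} $O(P/L)$. Each of the $n$ coefficients contributes a per-coordinate TV error of order $P/L$, and these accumulate over the $n$ coefficients: the best generic bound is $O(\min(1,nP/L))$ (triangle inequality), and even the Pinsker/Hellinger improvement gives only roughly $O(\min(1,\sqrt{n}\,P/L))$. In the slow-growth regime, where $L$ may grow like $\log n$ or slower while $P\asymp\sqrt{L}$, these quantities tend to $1$, not to $0$: for instance with $L=n^{1/4}$ and $P\asymp L^{1/2}$, the drift of order $n/L$ in the empirical residue counts dwarfs the standard deviation $\sqrt{n/P}$, so the two joint laws become almost mutually singular as $n\to\infty$. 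Consequently your final bound $(1-c)^k+O(P/L)$ is really $(1-c)^k+O(\min(1,nP/L))$, and the second term does not vanish. This is precisely the obstruction that the paper's harmonic-analytic machinery is designed to circumvent: rather than comparing the coefficient law to uniform in TV (which is hopeless when $L$ is small relative to $n$), the paper applies Parseval and H\"older to $\EE_\calP(\mu_\calQ)$, bounds $\|\widehat\PP_\calP\|_1$ by $(1+P(P-1)/L)^n$ (which \emph{is} exponential in $n$), and then crushes it with the Porritt / Bienvenu--L\^e bound $\|\widehat\mu_p\|_\infty\leq p^{(3/4+\varepsilon)n}$, so that the net factor $\alpha^n Q^{-(1/4-\varepsilon)n}$ decays once $\min\calP$ exceeds an absolute constant. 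That cancellation inside the M\"obius exponential sum has no analogue in the TV picture, and your proof cannot be repaired without some substitute for it.

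Incidentally, your diagnosis that the ``principal obstacle'' lies in (b) is misplaced: for a \emph{genuinely} uniform monic $F$ of degree $n\geq2$ over $\FF_q$, the identity $\sum_{\deg F=n}\mu(F)=0$ together with $\PP(\mu(F)=0)=1/q$ gives exactly $\PP(\chi_q(\disc F)=-1)=\tfrac12(1-1/q)$, with no polynomial-to-permutation comparison and no $o_n(1)$ error. Ingredient (b) is in fact the easy part; ingredient (a) is where the real difficulty lives and where the paper deploys its main technical tools.
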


In the regime $L\geq n^7$, the theorem immediately follows from \eqref{GallagherThm}. The interesting part of the theorem is when $L=L(n)$ tends slowly to infinity as $n$ tends to infinity.

The main difficulty lies in the event $G_f=A_n$. Since $G_f\leq A_n$ if and only if $\disc f$ is a perfect square, provided $\disc (f) \neq 0$, we get that $\PP(G_f=A_n)\leq \PP(\disc(f)=\square)$. Hence, Theorem~\ref{thm:Gal_Sn} follows  from the following theorem, see \S\ref{sec:proofthm1}.

    \begin{theorem}\label{th:uniform_measures}
    For every $\frac12>\delta>0$ 
    there exists $N>0$ 
    such that the following holds: 
    Let $\frac{1}{8}>\varepsilon>0$, 
    let $a,L,n$ be integers such that $n>8$ and $L\geq N$, let $\zeta_0,\zeta_1,\ldots $ be independent random variables taking values uniformly in $[a + 1,a + L]\cap \mathbb{Z}$, and let
    \begin{equation*}
        f = X^n + \sum_{k=0}^{n-1} \zeta_k X^k,
    \end{equation*}
    be the corresponding random polynomial. 
    Then, 
    \begin{equation*}
        \mathbb{P}(\disc(f)=\square) \ll 2^{-(\frac{1}{2}-\delta)  \frac{\log L}{\log \log L}} + \frac{\log L}{\log \log L} \ps{\frac{2}{(1-\delta)\log L}}^{(\frac14-\varepsilon) n},
    \end{equation*}
    where the implied constant is absolute.
    \end{theorem}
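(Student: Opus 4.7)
The plan is to run a quadratic sieve with many small primes. Since $\disc(f)$ being a rational square forces $\disc(f)\bmod p$ to be a square in $\FF_p$ (including $0$) for every prime $p$, for any primes $p_1<\cdots<p_k$ we have
\[
\PP(\disc(f)=\square)\le \PP\bigl(\disc(f)\bmod p_i\text{ is a square in }\FF_{p_i}\text{ for every }i\bigr).
\]
I would take the $p_i$ to be all primes in a window around $(1-\delta)\log L$, so that by the prime number theorem $k\asymp \log L/\log\log L$ and their product $N=\prod_i p_i$ stays below $L^{1-\eta}$ for a suitable $\eta>0$.

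First I would pass from the $\zeta_j$'s to genuinely i.i.d.\ uniform samples mod each $p_i$. Since each $\zeta_j$ is uniform on an interval of length $L\ge N$, its reduction mod $N$ is within total variation $O(N/L)$ of the uniform distribution on $\ZZ/N$, and by independence of the $\zeta_j$ together with CRT the tuple $(f\bmod p_i)_{i\le k}$ is within total variation $O(nN/L)$ of a family of independent uniform monic polynomials $f^{(i)}\in \FF_{p_i}[X]$ of degree $n$. Next, for each fixed prime $p$ I would prove the per-prime bound
\[
\PP\bigl(\disc(f^{(i)})\text{ is a square in }\FF_p\bigr)\le \tfrac12+\rho_p,\qquad \rho_p\ll (2/p)^{(1/4-\varepsilon)n},
\]
using that $\disc(f)$ is a nonzero square iff $f$ is squarefree with an even number of irreducible factors of even degree (Frobenius an even permutation), and that the non-squarefree set has density $O(1/p)$. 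The estimate reduces to a quadratic character sum $\sum_f \chi_p(\disc f)$ over monic degree-$n$ polynomials in $\FF_p[X]$, which I would treat via the Euler product
\[
\sum_{f\text{ monic sqfree}}\chi_p(\disc f)\, t^{\deg f}=\prod_{\pi\text{ irred}}\bigl(1+\chi_p(\disc \pi)\, t^{\deg \pi}\bigr),
\]
pairing irreducible factors by parity of degree to extract the power-in-$n$ savings.

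Combining near-independence and the per-prime estimate yields
\[
\PP(\disc(f)=\square)\le \prod_{i=1}^k\bigl(\tfrac12+\rho_{p_i}\bigr)+O(nL^{-\eta})=2^{-k}\prod_i(1+2\rho_{p_i})+O(nL^{-\eta}).
\]
With $k$ chosen slightly below the sieve depth so the TV budget is respected, the leading factor $2^{-k}$ produces the first term $2^{-(1/2-\delta)\log L/\log\log L}$, while the linear-in-$\rho$ term contributes $2^{-k}\cdot 2k\cdot\max_i\rho_{p_i}$, matching the second term $\tfrac{\log L}{\log\log L}\bigl(2/((1-\delta)\log L)\bigr)^{(1/4-\varepsilon)n}$.

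I expect the main difficulty to lie in the per-prime bias bound, which must hold uniformly in $n$ while $p\asymp\log L$ is much smaller than $n$. This regime lies outside both the classical "large $p$, fixed $n$" equidistribution framework and the "random permutation" asymptotic for cycle types, and a direct Weil-type estimate loses too much. The exponent $n/4$ hints at a Cauchy--Schwarz step exploiting that the extremal "bad" configurations are essentially squares of half-degree polynomials (of which there are $\asymp p^{n/2}$), combined with a structural lemma to discard the non-squarefree contribution. A secondary subtlety is the loss of the factor $1/2$ in the main exponent, which seems to come from having to respect the total-variation budget $nN/L$ simultaneously with the sieve depth.
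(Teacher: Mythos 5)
Your overall sieve strategy --- reduce $\disc(f)$ modulo all primes in a window around $(1-\delta)\log L$, use the Stickelberger--Swan characterization $\chi_p(\disc F)=(-1)^{\deg F}\mu(F)$, and multiply the per-prime savings --- matches the paper's. But the way you propose to handle the non-uniformity of the coefficient distribution has a genuine gap, and as a consequence you have mislocated where the exponent $(1/4-\varepsilon)n$ comes from.

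The total-variation step does not survive the regime the theorem is about. You pass from $(f\bmod p_i)_i$ to independent uniform polynomials at a cost $O(nN/L)$ in total variation, with $N\approx L^{1-\eta}$. For this to be $o(1)$ you need $n=o(L^{\eta})$. But the theorem is asserted \emph{uniformly in $n$}, and its whole point is the slowly-growing-$L$ regime where $n$ can be arbitrarily large compared to $L$; there the TV error $nL^{-\eta}$ is enormous. No choice of sieve depth rescues this, because the $2^{-k}$ main term forces $N$ to be a fixed power of $L$, while $n$ is unconstrained. Relatedly, if you \emph{could} pass to uniform $f$ mod $p$, the per-prime bias would be exactly $\tfrac{1}{2p}$ (since $\sum_{F\in\MM_{p,n}}\mu(F)=0$ for $n\ge 2$ and the non-squarefree density is $1/p$); there is no exponential-in-$n$ savings to be found in the uniform case, and none is needed. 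So the second term of the theorem, $\frac{\log L}{\log\log L}\bigl(\tfrac{2}{(1-\delta)\log L}\bigr)^{(1/4-\varepsilon)n}$, cannot arise from a per-prime character-sum estimate for uniform polynomials, nor from the Euler-product/Cauchy--Schwarz avenue you sketch.

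What actually produces the $(1/4-\varepsilon)n$ exponent is a Fourier-analytic replacement for your TV step. The paper never approximates $\PP_\calP$ (the pushforward of $f$ to $\MM_{\calP,n}$) by uniform; instead it proves the weak $L^1$ bound $\sum_{\bG}\bigl|\widehat{\PP}_\calP(T^{-n}\bG)\bigr|\le\bigl(1+\tfrac{P(P-1)}{L}\bigr)^n\le 2^n$ when $L\ge P(P-1)$ (Lemma~\ref{lm:L1_estimation}), which holds for every $n$. This is paired, via H\"older, with the $L^\infty$ bound on exponential--M\"obius sums $\|\widehat{\mu}_p\|_\infty\le p^{(3/4+\varepsilon_0)n}$ of Porritt and Bienvenu--L\^e (Theorem~\ref{thm:mu_exp_sum}), so that $|\EE_\calP(\mu_{\calQ})|\le 2^n\prod_{p\in\calQ}p^{-(1/4-\varepsilon_0)n}$. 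The $2^n$ loss from the coefficient distribution is absorbed because $p\gg \log L$ is large enough; this is precisely the mechanism that works uniformly in $n$ and that your TV argument cannot reproduce. The non-squarefree term is likewise handled for the true (non-uniform) measure via a divisor-sum condition (Lemma~\ref{lm:convenient cond3}), not by equidistribution. To repair your proposal you would need to abandon the TV approximation and argue directly with $\EE_\calP(\mu_\calQ)$ and $\EE_\calP(\eta_\calQ)$ against the actual law of the coefficients, which is the content of Proposition~\ref{prop:general_result}.
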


    The proof of Theorem~\ref{th:uniform_measures} is based on harmonic analysis and on bounds for exponential-M\"obius sums over function fields \cite{BienvenuLe,Porrit_Mu_Exp_Sum}. Our method allows us to prove this theorem for general measures that satisfy several conditions, (see Proposition~\ref{prop:general_result}). We  deduce Theorem~\ref{th:uniform_measures} from Proposition~\ref{prop:general_result} in \S\ref{sec:proofthm2}. 

    \section*{Acknowledgments}
    The authors thank Zeev Rudnick for a beneficial conversation regarding exponential sums.
    
    This research was supported by the Israel Science Foundation (grant no. 702/19).

\section{Square discriminant for general measures}
   \subsection{Harmonic analysis over finite fields}
    We introduce the notation and basic results needed to prove the main theorem. We restrict to prime fields for simplicity of notation. 
    For a prime $p$, let $\FF_p$ be the finite field with $p$ elements  and $\FF_p[T]$ the polynomials ring over $\FF_p$. 
    We denote by $\mathcal{M}_{p,n} \subseteq \mathcal{M}_p\subset \FF_p[T]$ the subsets of monic polynomials of degree $n$ and of all monic polynomials, respectively. 
    Let $\FF_p((T^{-1}))$ be the field of Laurent series of the form $\xi = \sum_{-\infty}^N c_j T^j$, $N \in \ZZ$,  $c_j \in \FF_p$ and let
    \begin{equation*}
        \mathbb{T}_p = \FF_p((T^{-1}))/\FF_p[T] .
    \end{equation*}
    Each element in $\mathbb{T}_p$ has a unique representative of the form $\sum_{j < 0} c_j T^j$, $c_j \in \FF_p$.
    Let $\res_p\colon \mathbb{T}_p \rightarrow \FF_p$ be the additive function defined by $\res_p(\xi) = c_{-1}$.
    In the classical analogy between $\ZZ$ and $\FF_p[T]$, $\mathcal{M}_p$ plays the role of $\ZZ_{>0}$, and $\mathcal{M}_{p,n}$ the role of $[x,2x]\cap \ZZ$, with $\log x$ corresponding to $n$. 
    The analog of $\mathbb{R}$ is $\FF_p((T^{-1}))$ and of $\mathbb{T} = \mathbb{R}/\ZZ$ is $\mathbb{T}_p$. 
    Let 
    \[
        e(z) = e^{2\pi i z} \qquad \mbox{and}\qquad e_p(\xi) = e(\res_p(\xi)/p),
    \]
    for $z\in \CC$ and $\xi\in \mathbb{T}_p$. The latter is well defined. 
    We define the Fourier transform of $\eta\colon \mathcal{M}_{p,n}\to \CC$ to be 
    \[
        \widehat{\eta}(\xi) = \sum_{G\in \calM_{p,n}} \eta(G)e_p(\xi G) , \qquad \xi \in \mathbb{T}_p.
    \]
    This is the analogue of the discrete Fourier transform in the classical setting. 
    
    Our method necessitates considering several primes simultaneously. Let $\calP$ be a finite set of primes and $P=\prod_{p\in \calP}p$.
    Let 
    \[
        \FF_\calP = \prod_{p \in \calP} \FF_p, \quad \FF_{\mathcal{P}}((T^{-1})) = \prod_{p\in\mathcal{P}} \FF_p((T^{-1})), \quad \mbox{and}\quad \mathcal{M}_{\calP,n} = \prod_{p \in \mathcal{P}} \mathcal{M}_{p,n}.
    \]
    Denote the elements of $\mathcal{M}_{\calP,n}$ and of $\mathbb{F}_{\calP}((T^{-1}))$ by $\bF = (F_p)_{p\in\calP}$ and $\boldsymbol{\xi} = (\xi_p)_{p\in \calP}$, respectively. 
    Then, we set
    \begin{align*}
        \psi_\calP(\bxi) = \sum_{p \in \calP} \frac{\res(\xi_p)}{p} \mod 1, &&
         \be(\bxi) 
         = e(\psi_\calP(\bxi)) = \prod_{p\in \calP} e_p(\xi_p).
    \end{align*}
    Then, for $\eta\colon \MM_{\calP,n} \rightarrow \mathbb{C}$ we define $\widehat{\eta}\colon \mathbb{F}_\calP((T^{-1})) \rightarrow \mathbb{C}$ by
    \begin{equation*}
        \widehat{\eta}(\bxi) = \sum_{\bG\in \mathcal{M}_{\calP,n}} \eta(\bG) {\be}(\bxi \bG).
    \end{equation*}

We have the following three classical formulas. The first one is orthogonality of characters: for $\bF\in \MM_{\calP,n}$,
\begin{equation}\label{eq:ortho_charac}
     \sum_{\bG \in \MM_{\calP,n}} e_{\calP} (T^{-n}\bF \bG) = \begin{cases}
            P^n & F_p = T^n, \forall p\in \calP   \\
            0 & \textnormal{otherwise}.
        \end{cases}
\end{equation}
The second is the Fourier inversion formula saying that if $\eta\colon \MM_{\calP,n} \rightarrow \mathbb{C}$, then
    \begin{equation}\label{lm:fourier_inversion}
        \eta(\bF) = \frac{1}{P^n} \sum_{\bG \in \MM_{\calP,n}} \widehat{\eta}(T^{-n}\bG)\be(-T^{-n}\bG \bF),
    \end{equation}
and the last is the Parserval-Plancherel theorem, which for real functions $\eta,\zeta\colon \MM_{\calP,n} \rightarrow \mathbb{R}$ gives that
\begin{equation}\label{lm:Parseval}
        \sum_{\bF \in \MM_{\calP,n}} \eta(\bF) \zeta(\bF) = \frac{1}{P^n} \sum_{\bG \in \MM_{\calP,n}} \widehat{\eta}(T^{-n} \bG) \widehat{\zeta}(-T^{-n} \bG).
    \end{equation}
Since these are so classical we omit their proofs.

\subsection{Main technical result}
Let $f$ be a random monic polynomial of degree $n$ with coefficients in $\ZZ$.
The pushforward defines a probability measure $\PP_{\calP}$ on $\FF_\calP[X]$, supported on  $\mathcal{M}_{\calP,n}$, and an expectation function $\mathbb{E}_{\calP}$: 
\begin{equation}\label{eq:P_P}
    \PP_\calP(\bF) := \PP\left(\bigcap_{p\in\calP} \{f_p = F_p\}\right) \quad \mbox{and}\quad \mathbb{E}_{\calP}(\eta)=\mathbb{E}_{\calP}(\eta(\bF)) = \sum_{\bF\in \mathcal{M}_{\calP,n}} \PP_{\calP}(\bF) \eta(\bF).
\end{equation}
The M\"{o}bius function $\mu_p$ on $\FF_p[T]$ is defined by
\[
\mu_p(F_p)=\begin{cases}
    (-1)^r &   F_p \mbox{ is a product of $r$ distinct irreducible polynomials}\\
    0& F_p \mbox{ is not squarefree.}
    \end{cases}
\]

\begin{proposition}\label{prop:general_result}
     Let $0<\alpha$, $1\leq \gamma<4/3$, and $0<c< \frac{4-3\gamma}{4\gamma}$.
     Then there exists $C>0$ such that the following holds.  Let $(\lambda_k)_{k=0}^{\infty}$ be a sequence of probability measures on $\ZZ$, let  $(\zeta_k)_{k=0}^\infty$ be a sequence of independent random variables with $\zeta_k$ distributing according to $\lambda_k$, $k=0,1,\ldots$, and let 
    \(
        f = X^n + \sum_{k=0}^{n-1} \zeta_k X^k
    \)
    be the corresponding random polynomial.
    Let $\calP$ a finite set of primes and let $\omega\colon \calP \to \mathbb{R}_{\geq 0}$ be a function. 
    Assume that
    \begin{enumerate}[label=(\ref*{prop:general_result}.\arabic*), ref=(\ref*{prop:general_result}.\arabic*)]
        \item $\min\calP\geq C$, \label{cond:no_small_primes}
        \item $\sum_{\substack{\bG \in \MM_{\calP,n}}} |\widehat{\PP}_\calP(T^{-n}\bG)|^{\gamma} \leq \alpha^{\gamma n}$, and \label{cond:fourier_norm_bd}
        \item for every subset $\mathcal{Q} = \{p_1,\ldots,p_r\} \subseteq \calP$, we have 
        \[
        \Bigg|
        \sum_{1 \leq i_1,\ldots,i_r \leq \frac{n}{2} } \underset{\substack{D_k \in         \MM_{k,i_k} \\ \forall 1\leq k \leq r}}
        {\sum \dots \sum}\PP(D_1^2|f_{p_1},\ldots,D_r^2|f_{p_r})\prod_{1\leq m \leq r}\mu_{p_m}(D_m) 
             \Bigg|
            \leq \prod_{1\leq m\leq r}\omega(p_m)^{-1} .
        \]
         \label{cond:sq_divisors}
    \end{enumerate}
    Then, 
    \begin{equation}\label{assert1}
        \mathbb{P}(\disc(f) =\square) \leq 
        \prod_{p\in\calP}\Big(1+\frac{1}{2p^{c n}}\Big) -1+ \frac{1}{2^{\#\calP}} \cdot \prod_{p \in \calP}(1+\omega(p)^{-1}).
    \end{equation}
\end{proposition}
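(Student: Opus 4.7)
The plan is to upper bound $\PP(\disc(f) = \square)$ by the probability of the joint local event that $\disc(f_p)$ is a square (or zero) in $\FF_p$ for every $p \in \calP$, and then decompose each local indicator using Stickelberger's theorem. For a monic $F \in \FF_p[T]$ of degree $n$, Stickelberger yields
\[
    \ind_{\mathrm{sq}}(F) = \tfrac{1}{2}\bigl(1+(-1)^n\mu_p(F)\bigr) + \tfrac{1}{2}\ind_{\mathrm{nsf}}(F),
\]
where $\ind_{\mathrm{sq}}(F)$ is the indicator that $\disc(F)$ is a square in $\FF_p$ and $\ind_{\mathrm{nsf}}(F)$ the indicator that $F$ is not squarefree. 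Since $\disc(f) = \square$ implies $\disc(f_p) = \square$ in $\FF_p$ for each $p$, taking the product over $\calP$ and expanding gives
\[
    \PP(\disc f = \square) \leq \mathbb{E}\!\left[\prod_{p \in \calP}\ind_{\mathrm{sq}}(f_p)\right] = \frac{1}{2^{\#\calP}}\sum_{\substack{\mathcal{A},\mathcal{B} \subseteq \calP \\ \mathcal{A} \cap \mathcal{B} = \emptyset}}(-1)^{n|\mathcal{A}|}\,T_{\mathcal{A},\mathcal{B}},
\]
where $T_{\mathcal{A},\mathcal{B}} := \mathbb{E}\bigl[\prod_{p \in \mathcal{A}}\mu_p(f_p)\prod_{p \in \mathcal{B}}\ind_{\mathrm{nsf}}(f_p)\bigr]$. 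I will control the $\mathcal{A} = \emptyset$ terms via \ref{cond:sq_divisors} and the $\mathcal{A} \neq \emptyset$ terms via harmonic analysis.

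For $\mathcal{A} = \emptyset$, Möbius inversion $\ind_{\mathrm{nsf}}(F) = -\sum_{\deg D \geq 1,\,D^2 \mid F}\mu_p(D)$ expresses $T_{\emptyset,\mathcal{B}}$ as precisely the quantity bounded in \ref{cond:sq_divisors} with $\calQ = \mathcal{B}$, yielding $|T_{\emptyset,\mathcal{B}}| \leq \prod_{p \in \mathcal{B}}\omega(p)^{-1}$. Summing over $\mathcal{B} \subseteq \calP$ contributes $\frac{1}{2^{\#\calP}}\prod_{p \in \calP}(1 + \omega(p)^{-1})$, matching the second term of \eqref{assert1}.

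For $\mathcal{A} \neq \emptyset$ the target is the amplitude bound $|T_{\mathcal{A},\mathcal{B}}| \leq \prod_{p \in \mathcal{A}}p^{-cn}$, uniformly in $\mathcal{B}$; summing over disjoint pairs $(\mathcal{A},\mathcal{B})$ with $\mathcal{A} \neq \emptyset$ and dividing by $2^{\#\calP}$ will then produce $\sum_{\mathcal{A} \neq \emptyset}\prod_{\mathcal{A}} \frac{1}{2p^{cn}} = \prod_{p \in \calP}(1+\tfrac{1}{2p^{cn}})-1$, i.e.\ the first term of \eqref{assert1}. To establish the amplitude bound I view $\eta(\bF) := \prod_{p\in\mathcal{A}}\mu_p(F_p)\prod_{p\in\mathcal{B}}\ind_{\mathrm{nsf}}(F_p)$ as a tensor-product function on $\mathcal{M}_{\mathcal{A} \cup \mathcal{B}, n}$, apply Parseval--Plancherel \eqref{lm:Parseval} to the marginal measure $\PP_{\mathcal{A} \cup \mathcal{B}}$, and invoke Hölder with conjugate exponents $(\gamma, \gamma' = \gamma/(\gamma-1))$:
\[
    |T_{\mathcal{A},\mathcal{B}}| \leq \frac{1}{(P_{\mathcal{A} \cup \mathcal{B}})^n}\,\|\widehat{\PP}_{\mathcal{A} \cup \mathcal{B}}\|_{\gamma}\,\|\widehat{\mu_{\mathcal{A}}}\|_{\gamma'}\,\|\widehat{X_{\mathcal{B}}}\|_{\gamma'},
\]
with $X_{\mathcal{B}} := \prod_{p \in \mathcal{B}}\ind_{\mathrm{nsf}}$. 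Marginalization corresponds to plugging $\xi_p = 0$ (equivalently $G_p = T^n$) in the dropped coordinates, which makes the $\ell^\gamma$ sum a sub-sum of the full one, so $\|\widehat{\PP}_{\mathcal{A} \cup \mathcal{B}}\|_\gamma \leq \|\widehat{\PP}_{\calP}\|_\gamma \leq \alpha^n$ by \ref{cond:fourier_norm_bd}; interpolating Plancherel with the trivial $\ell^\infty$ bound gives $\|\widehat{X_{\mathcal{B}}}\|_{\gamma'} \leq (P_{\mathcal{B}})^n$.

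The decisive input is $\|\widehat{\mu_p}\|_{\gamma'} \leq p^{n(1-c')}$ for some $c' > c$, obtained by interpolating the Bienvenu--Lê / Porritt \cite{BienvenuLe,Porrit_Mu_Exp_Sum} pointwise estimates on Möbius-exponential sums against the Plancherel bound $\|\widehat{\mu_p}\|_2 \leq p^n$; the admissible exponent threshold works out to $(4-3\gamma)/(4\gamma)$, which accounts for the hypothesis on $c$. Taking products over $p \in \mathcal{A}$ yields $|T_{\mathcal{A},\mathcal{B}}| \leq \alpha^n \prod_{p\in\mathcal{A}} p^{-c'n}$, and condition \ref{cond:no_small_primes} with $C$ chosen so that $C^{c'-c} \geq \alpha$ absorbs the distributional overhead via $\alpha^n \leq (\min\calP)^{n(c'-c)} \leq \prod_{p\in\mathcal{A}}p^{n(c'-c)}$, delivering $|T_{\mathcal{A},\mathcal{B}}| \leq \prod_{p\in\mathcal{A}}p^{-cn}$ as required. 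The main technical obstacle I anticipate is the precise interpolation producing the $(4-3\gamma)/(4\gamma)$ exponent from the available Möbius exponential-sum estimates; a secondary subtlety is that without \ref{cond:no_small_primes} the Fourier method leaves a factor $\alpha^n$ that cannot be absorbed into the arithmetic savings at small primes.
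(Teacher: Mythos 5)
Your proposal is correct and follows essentially the same route as the paper: bound $\PP(\disc f=\square)$ by the local discriminant events, expand via Stickelberger--Swan into $\mu$- and non-squarefree-indicator factors over subsets of $\calP$, handle the pure non-squarefree terms ($\mathcal{A}=\emptyset$) by condition \ref{cond:sq_divisors}, and control the terms with $\mathcal{A}\neq\emptyset$ by Parseval, H\"older with exponents $(\gamma,\gamma')$, the marginalization inequality $\|\widehat{\PP}_{\mathcal{A}\cup\mathcal{B}}\|_\gamma\leq\alpha^n$, and the Porritt/Bienvenu--L\^e bound on $\widehat{\mu}_p$. One small remark: genuine interpolation of $\|\widehat{\mu}_p\|_\infty\leq p^{(3/4+\varepsilon)n}$ against $\|\widehat{\mu}_p\|_2\leq p^n$ actually yields $c'=(1-2/\gamma')(1/4-\varepsilon)\to(2-\gamma)/(4\gamma)$, which is \emph{strictly larger} than the stated $(4-3\gamma)/(4\gamma)$ for $\gamma>1$; the threshold $(4-3\gamma)/(4\gamma)$ in the hypothesis is what one gets from the cruder bound $\sum_G|\widehat{\mu}_p|^{\gamma'}\leq p^n\|\widehat{\mu}_p\|_\infty^{\gamma'}$ (which is what the paper uses), so your claimed threshold is slightly misattributed but in the safe direction and does not affect the validity of the argument.
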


    The proof of Proposition~\ref{prop:general_result} is given in \S\ref{proof:Prop}. 
    It is based on the following two ingredients.  
    
    Let $q$ be an odd prime power, let $\FF_q$ be the finite field with $q$ elements, $\chi_q$ the quadratic character of $\FF_q$, and $\mu$ the M\"obius function. The first is the  formula of Stickelberger and Swan:
    \begin{equation*}
        \chi_q(\disc(F)) = (-1)^{\deg F} \mu (F),
    \end{equation*}
    see, for example, Theorem 6.68 in \cite{Algebraic_Coding_Theory} and the discussion after its proof. 
    Thus, 
    \begin{equation}\label{cor:sq_disc_indicator}
        \mathbbm{1}_{\disc(F) = \square} = \frac{1+(-1)^{\deg F} \mu(F) + \mathbbm{1}_{\mu(F) = 0}}{2},
    \end{equation}
    where $\mathbbm{1}_{\mu(F) = 0} = 1-\mu^2(F)$ is the indicator function of non-squarefree polynomials. The second is a bound 
    on the $L_{\infty}$ norm of $\widehat{\mu}_q$ proved independently by Porrit \cite{Porrit_Mu_Exp_Sum} and Bienvenue and L\^e \cite{BienvenuLe}:
\begin{theorem}\label{thm:mu_exp_sum}
    For every $0<\varepsilon < 1/4$ there exists $q_0>0$ such that for every prime power $q \geq q_0$ we have
    \[
        \max_{\vartheta \in \mathbb{T}} |\widehat{\mu}_q(\vartheta) | \leq q^{(\frac{3}{4}+\varepsilon)n}.
    \]
\end{theorem}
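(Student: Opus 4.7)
The plan is to transpose the classical Vinogradov--Vaughan strategy for M\"obius exponential sums from $\mathbb{Z}$ to $\mathbb{F}_q[T]$. Fix a threshold parameter $u \in \{1,\ldots,n-1\}$ to be optimized at the end, and apply a Vaughan-type combinatorial identity for $\mu_q$ (or the somewhat cleaner Heath--Brown identity) to decompose
\[
    \widehat{\mu}_q(\vartheta) \;=\; S_I(\vartheta;u) \;+\; S_{II}(\vartheta;u),
\]
where $S_I$ collects the \emph{Type I} contributions from polynomials having a divisor of degree at most $u$, and $S_{II}$ collects the \emph{Type II} bilinear contributions in which both divisors have degree exceeding $u$.

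For the Type I piece, swapping the order of summation produces sums of the form
\[
    \sum_{\deg D \leq u} \mu_q(D)\, c_D \sum_{G \in \mathcal{M}_{q,\, n-\deg D}} e_q(\vartheta D G),
\]
with mild weights $|c_D| \leq 1$ inherited from the decomposition. The inner additive character sum is complete on $\mathcal{M}_{q,n-\deg D}$, and by the orthogonality identity \eqref{eq:ortho_charac} applied over $\mathbb{F}_q[T]$ it vanishes unless the relevant lower-order Fourier coefficients of $\vartheta D$ all vanish. Using a function-field Dirichlet approximation to count the exceptional $D$ yields an acceptable bound on $S_I$.

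For the Type II piece, I would apply Cauchy--Schwarz in the ``M\"obius variable'' $D$, reducing matters to a second moment of the form $\sum_D |\sum_E b(E)\, e_q(\vartheta D E)|^2$ with $\deg D,\, \deg E > u$ and $\deg D + \deg E = n$. Expanding the square and swapping summation brings in inner sums $\sum_D e_q(\vartheta D (E - E'))$, again governed by orthogonality. The diagonal $E = E'$ gives the trivial main term, while the off-diagonal is controlled by a second Dirichlet-type approximation step. Balancing the Type I and Type II bounds via a suitable choice of $u$ then produces the exponent $3n/4$, with the $\varepsilon$ absorbing the polynomially-many weights introduced by the decomposition.

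The main obstacle will be making the Type II bilinear estimate uniform over $\vartheta$. This is the function-field analogue of the classical \emph{minor arcs} problem, and it requires handling separately the $\vartheta$ that admit a very good rational approximation $A/B \in \mathbb{F}_q(T)$ with $\deg B$ small: in that ``major arcs'' regime, the diagonal in the Cauchy--Schwarz step is no longer dominant, and one must invoke an auxiliary input -- typically a Weil-type bound on exponential sums modulo $B$ -- to absorb the major-arc loss into the $q^{\varepsilon n}$ factor.
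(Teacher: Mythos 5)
Theorem~\ref{thm:mu_exp_sum} is not proved in this paper: it is quoted as an external input, due independently to Porritt \cite{Porrit_Mu_Exp_Sum} and to Bienvenu and L\^e \cite{BienvenuLe}. There is therefore no internal proof to compare against, and the correct move for the paper's purposes is simply to cite, as the authors do.

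That said, your sketch goes in the direction of what those references actually do -- a Vaughan/Heath--Brown-type bilinear decomposition over $\FF_q[T]$, function-field Dirichlet approximation of $\vartheta$, Cauchy--Schwarz plus orthogonality on the Type~II piece -- but it misplaces the source of the exponent $3/4$. You frame the ``major-arc'' regime (good rational approximations $A/B$ with $\deg B$ small) as a residual loss to be absorbed into the $q^{\varepsilon n}$ factor. In fact it is one of the two competing bounds, not a correction. On the major arcs one writes $\widehat{\mu}_q(\vartheta)$ as a sum over residues $a \bmod B$ of $\sum_{f\equiv a \bmod B}\mu(f)$, and bounds that inner sum by roughly $q^{n/2}$ using the Riemann Hypothesis for Dirichlet $L$-functions over $\FF_q[T]$ (Weil); this yields a contribution of order $q^{\deg B + n/2}$. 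The minor-arc/bilinear bound decays as $\deg B$ grows, and the $q^{(3/4+\varepsilon)n}$ bound emerges by balancing the two sides around $\deg B\approx n/4$. Optimizing the Vaughan parameter $u$ alone does not produce $3/4$; without the Weil input you would not beat the trivial bound uniformly in $\vartheta$. Your Type~I discussion depends on the same dichotomy: non-vanishing of the complete inner sum $\sum_{G} e_q(\vartheta D G)$ is a Diophantine condition on $\vartheta D$, so the count of exceptional $D$ can only be carried out after fixing the rational approximation class of $\vartheta$. As written this is a plausible plan rather than a proof, and the cited papers supply precisely the missing major-arc analysis and the quantitative bilinear estimate.
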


A key step in the proof is to evaluate the probability that the discriminant being a square modulo $p$. In particular, we may prove the following result on finite fields, which we state formally as it may be interesting on its own.
\begin{proposition}
\label{thm:result_over_Fq}
    Let $1\leq \gamma < 4/3$, $\alpha>0$ and $0<c< \frac{4-3\gamma}{4\gamma}$. Then, there exists $C>0$ such that the following holds. Let $q\geq C$ be a prime power, let $(\lambda_k)_{k=0}^{\infty}$ be a sequence of probability measures on $\FF_q$, let  $(\zeta_k)_{k=0}^\infty$ be a sequence of independent random variables with $\zeta_k$ distributing according to $\lambda_k$, $k=0,1,\ldots$, and let 
    \(
        F = X^n + \sum_{k=0}^{n-1} \zeta_k X^k
    \)
    be the corresponding random polynomial.
    Assume that
    \begin{enumerate}
        \item $\sum_{\substack{F \in \MM_{q,n}}} |\widehat{\PP}(T^{-n}F)|^{\gamma} \leq \alpha^{\gamma n}$\label{cond:fourier_norm_bd_fq}\label{cond: finite field 1}, and
        \item $\sum_{i=1}^{\lfloor n/2 \rfloor}\sum_{D \in \mathcal{M}_{q,i}} \mu(D) \cdot \PP(D^2 | F)\leq \omega_q^{-1}$ 
         for some  $\omega_q \in  \mathbb{R}_{\geq 0}$. \label{cond:divisor_cond_fq}
    \end{enumerate}
    Then,
    \begin{equation*}
        \left | \PP(\disc(F)=\square) - \frac{1}{2} \right | \leq \frac{1}{2q^{c n }} + \frac{1}{2\omega_q}.
    \end{equation*}
\end{proposition}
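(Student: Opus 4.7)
The plan is to apply the Stickelberger--Swan identity \eqref{cor:sq_disc_indicator} and bound the two resulting terms separately via harmonic analysis over $\FF_q$. Taking expectations in \eqref{cor:sq_disc_indicator} gives
\[
\PP(\disc(F)=\square) - \tfrac{1}{2} \;=\; \tfrac{(-1)^n}{2}\,\EE[\mu(F)] \;+\; \tfrac{1}{2}\,\PP(F\text{ is not squarefree}),
\]
so it suffices to establish $|\EE[\mu(F)]|\leq q^{-cn}$ and $\PP(F\text{ is not squarefree})\leq \omega_q^{-1}$.

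The second bound is immediate from the M\"obius identity $\mu^2(F)=\sum_{D^2\mid F}\mu(D)$. Separating the $D=1$ contribution and taking expectation gives
\[
\PP(F\text{ is not squarefree}) \;=\; -\sum_{i=1}^{\lfloor n/2\rfloor}\sum_{D\in\MM_{q,i}}\mu(D)\,\PP(D^2\mid F),
\]
which is at most $\omega_q^{-1}$ by hypothesis \ref{cond:divisor_cond_fq}.

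For the M\"obius moment, apply Plancherel \eqref{lm:Parseval} with $\eta=\PP_q$ and $\zeta=\mu_q|_{\MM_{q,n}}$ to obtain
\[
\EE[\mu(F)] \;=\; \frac{1}{q^n}\sum_{G\in\MM_{q,n}}\widehat{\PP}_q(T^{-n}G)\,\widehat{\mu}_q(-T^{-n}G).
\]
H\"older's inequality with conjugate exponents $\gamma$ and $\gamma':=\gamma/(\gamma-1)$, together with hypothesis \ref{cond: finite field 1}, yields
\[
|\EE[\mu(F)]| \;\leq\; \tfrac{\alpha^n}{q^n}\,\|\widehat{\mu}_q\|_{\gamma'},
\]
where the $\ell^p$-norms are taken over $\{T^{-n}G : G\in\MM_{q,n}\}$. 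To control $\|\widehat{\mu}_q\|_{\gamma'}$, the plan is to interpolate between Plancherel applied to $\mu_q$, which (using $|\mu|\leq 1$) gives $\|\widehat{\mu}_q\|_2\leq q^n$, and Theorem \ref{thm:mu_exp_sum}, which for $q$ large enough gives $\|\widehat{\mu}_q\|_\infty\leq q^{(3/4+\varepsilon')n}$ for any chosen $\varepsilon'>0$; the standard log-convexity bound $\|\cdot\|_{\gamma'}\leq \|\cdot\|_\infty^{1-2/\gamma'}\|\cdot\|_2^{2/\gamma'}$ (valid since $\gamma'>4>2$) then produces
\[
|\EE[\mu(F)]| \;\leq\; \alpha^n\, q^{n\bigl(1/4-1/(2\gamma)+O(\varepsilon')\bigr)}.
\]

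The main obstacle is reconciling this exponent with the stated range $c<(4-3\gamma)/(4\gamma)$. Since $(4-3\gamma)/(4\gamma)=1/\gamma-3/4$ and $\gamma\geq 1$, an elementary computation gives $1/\gamma-3/4\leq 1/(2\gamma)-1/4$, so the hypothesis on $c$ leaves a strictly positive margin in the exponent above. Choosing $\varepsilon'$ small (in terms of $\gamma,c$) and then $C>0$ large (in terms of $\alpha,\gamma,c$) so that the factor $\alpha^n$ is absorbed by $q^{n(1/(2\gamma)-1/4-c-O(\varepsilon'))}$, we conclude $|\EE[\mu(F)]|\leq q^{-cn}$ for every prime power $q\geq C$, and combining with the squarefree bound gives the claim.
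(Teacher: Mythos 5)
Your proof is correct and follows the paper's overall strategy: the same Stickelberger--Swan decomposition \eqref{cor:sq_disc_indicator}, the same reduction of the non-squarefree probability to Condition (2) via \eqref{eq:mu^2 identity}, and the same Parseval--plus--H\"older treatment of $\EE[\mu(F)]$ with the exponent pair $(\gamma,\gamma')$. The one place you deviate is in estimating $\|\widehat\mu_q\|_{\gamma'}$: the paper bounds each term of the $\ell^{\gamma'}$-sum pointwise by $\|\widehat\mu_q\|_\infty$, giving $\|\widehat\mu_q\|_{\gamma'}\leq q^{n/\gamma'}\|\widehat\mu_q\|_\infty$ and hence the exponent $3/4-1/\gamma+O(\varepsilon')$; you instead interpolate between Plancherel (giving $\|\widehat\mu_q\|_2\leq q^n$) and the Porrit/Bienvenu--L\^e $L^\infty$ bound, arriving at the strictly better exponent $1/4-1/(2\gamma)+O(\varepsilon')$ for $\gamma>1$. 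Both suffice for the claimed range $c<(4-3\gamma)/(4\gamma)$, but your interpolation actually proves more: it would permit any $c<(2-\gamma)/(4\gamma)$, valid for all $1\leq\gamma<2$ rather than only $\gamma<4/3$. This is a genuine (if small) refinement. One peripheral remark: in the squarefree step you (like the paper) tacitly read Condition (2) as a bound on $\bigl|\sum_{i,D}\mu(D)\PP(D^2\mid F)\bigr|$, i.e.\ on $-\sum$; as literally written the hypothesis bounds the signed sum, which is automatically $\leq 0\leq\omega_q^{-1}$, so the intended statement surely carries the absolute value, and your use of it is the intended one.
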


\section{General measures} 
\subsection{Number theory auxiliary results}
For the reader's convenience, we recall some well-known results from number theory that shall be used in subsequent sections. 
Mertens' second theorem says that 
\begin{equation}
    \sum_{p \leq x} \frac{1}{p} = \log \log x + M + O(1/\log x), \label{eq:Merten's}
\end{equation}
where $M=0.261\ldots $ is  the Meissel–Mertens constant. Thus, 
\[\
\sum_{z<p\leq 2z}\frac{1}{p} = \log\log 2z-\log \log z +O(1/\log z) \ll 1,
\]
for all $z>1$. Hence, if $\omega(p)\geq C^{-1} p$ for all $ z< p\leq 2z$, then
\begin{equation}\label{cor:Mertens}
    \prod_{z<p\leq 2z }(1+\omega(p)^{-1})  \leq \exp\left(C\sum_{z<p\leq 2z}  \frac1p  \right) \ll 1.
\end{equation}
The prime number theorem has the following two classical formulations 
\begin{alignat}{2}
    \pi(x) &:= \sum_{p\leq x}1 &&= \frac{x}{\log x} + O(x/ (\log x )^2),\label{eq:pnt} \\
    \vartheta(x) &:=\sum_{p \leq x} \log p &&= x + O(x/\log x).\label{eq:chebyshev's pnt}
\end{alignat}
We may deduce the following bound for $\alpha>1$:
\begin{equation}\label{eq:boundingconvergent}
    \prod_{z<p\leq 2z}(1+Cp^{-\alpha}) \leq \exp\left(C'\frac{z}{z^{\alpha}\log z}\right)=1+O_\alpha\left(\frac{z}{z^{\alpha}\log z }\right). 
\end{equation}

\subsection{The indicator function of non-squarefrees}
The goal of this section is to provide a formula for the expectation value of the indicator function of non-squarefrees modulo several primes. 

Let $f$ be a random monic polynomial over $\ZZ$ with independent coefficients, such as in Proposition~\ref{prop:general_result}.
For a prime $p\in \ZZ$, let $f_p \in \FF_p[X]$ denote the polynomial one gets by reducing the coefficients of $f$ modulo $p$. Similarly, if $\mathcal{P}$ is a set of primes, then we denote $f_\calP := (f_p)_{p\in\calP} \in \MM_{\calP,n}$ and view it as a random variable.

For a subset $\calQ \subseteq \calP$, we extend  multiplicatively the definitions of the M\"obius function and the of indicator function of non-squarefrees: For $\bF\in \calM_{\calP}$, we define 
\begin{equation}
\begin{split}
    {\mu}_\calQ(\bF) &:= \prod_{p \in \calQ} \mu_p(F_p),\label{eq:big_mueta} \\
    {\eta}_\calQ(\bF) & := \prod_{p \in \calQ} \mathbbm{1}_{\mu_p(F_p)=0}.
\end{split}
\end{equation}
By the M\"obius inversion formula applied to the squareful part of $F\in \mathcal{M}_{p,n}$ one gets
\begin{equation}\label{eq:mu^2 identity}
    \mu_p^2(F) = \sum_{D^2|F}\mu_p(D),
\end{equation}
so since $\mathbbm{1}_{\mu_q(F)=0} = 1-\mu^{2}(F)$, we conclude that
\begin{equation}\label{eq:expandeta}
    \eta_{\calQ}(\bF) =  \prod_{p\in\calQ} (1-\sum_{D_p^2\mid F_p}\mu_p(D_p)) =(-1)^{\#\calQ}\prod_{p\in\calQ}\sum_{\substack{D_p^2\mid F_p\\D_p\neq 1}}\mu_p(D_p).
\end{equation}

\begin{lemma}\label{lm:mu=0 formula}
    Let $F$ be a random variable taking values in $\calM_{p,n}$ and let $E$ be an event. Then, 
    \begin{equation*}
        \PP(E\cap \{\mu_p(F)=0\}) = -\sum_{i=1}^{\lfloor n/2 \rfloor} \sum_{D \in \MM_{p,i}} \mu_p(D) \cdot \PP(E,D^2|F).
    \end{equation*}
\end{lemma}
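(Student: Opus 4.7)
The plan is to derive the claim by inserting the M\"obius identity \eqref{eq:mu^2 identity} into the indicator $\mathbbm{1}_{\mu_p(F)=0}=1-\mu_p^2(F)$ and then averaging against the event $E$.

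First I would write
\[
\mathbbm{1}_{\mu_p(F)=0} \;=\; 1-\mu_p^2(F) \;=\; 1-\sum_{D^2\mid F}\mu_p(D),
\]
using \eqref{eq:mu^2 identity}. The $D=1$ term contributes exactly $1$, so it cancels the leading $1$, and we are left with
\[
\mathbbm{1}_{\mu_p(F)=0} \;=\; -\sum_{\substack{D\in\calM_p\\ D\neq 1,\ D^2\mid F}}\mu_p(D).
\]
Next I would observe that $D^2\mid F$ with $\deg F=n$ forces $\deg D\leq \lfloor n/2\rfloor$, so the range of $D$ is finite and the sum may be stratified by degree:
\[
\mathbbm{1}_{\mu_p(F)=0} \;=\; -\sum_{i=1}^{\lfloor n/2\rfloor}\sum_{D\in\calM_{p,i}}\mu_p(D)\,\mathbbm{1}_{D^2\mid F}.
\]

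Finally, multiplying both sides by $\mathbbm{1}_E$, taking expectations, and interchanging the (finite) sum with the expectation gives
\[
\PP\bigl(E\cap\{\mu_p(F)=0\}\bigr)
\;=\;-\sum_{i=1}^{\lfloor n/2\rfloor}\sum_{D\in\calM_{p,i}}\mu_p(D)\,\PP\bigl(E,\,D^2\mid F\bigr),
\]
which is exactly the desired identity. There is no real obstacle here: the only thing to be careful about is identifying the $D=1$ contribution and using the degree bound $2\deg D\leq n$ to truncate the sum at $\lfloor n/2\rfloor$, both of which are immediate. The lemma is essentially a bookkeeping consequence of \eqref{eq:mu^2 identity}.
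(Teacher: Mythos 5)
Your proof is correct and is essentially the argument the paper gives: the paper simply cites its equation \eqref{eq:expandeta} (with $\calQ=\{p\}$), which packages the same unwinding of $\mathbbm{1}_{\mu_p=0}=1-\mu_p^2$ via \eqref{eq:mu^2 identity}, the cancellation of the $D=1$ term, and the degree truncation $\deg D\leq\lfloor n/2\rfloor$ that you spell out directly.
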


\begin{proof}
    Apply \eqref{eq:expandeta} with $\calQ=\{p\}$ to get 
    \[
    \begin{split}
    \PP(E \cap \{\mu_q(F)=0\}) 
    &= \EE(\ind_E\eta_p(F)) 
    = -\sum_{\substack{D\neq 1}}\mu_q(D)\EE( \mathbbm{1}_{E,D^2| F}).
    \end{split}
    \]
    We are done, since $\mathbb{E}(\mathbbm{1}_{E,D^2| F} )= \mathbb{P}(E,D^2\mid F)$ and $\deg D\leq \frac n2$ as $D^2\mid F$.
\end{proof}

\begin{proposition}
    \label{prop:not_sq_free_eq}
    Let $f$ be chosen as in Proposition~\ref{prop:general_result}, and let $\mathcal{Q} = \{p_1,\ldots ,p_r\}\subseteq \calP$ be  finite sets of prime numbers. Then,
    \begin{align*}
        \mathbb{E}_{\calP}(\eta_{Q})
        =(-1)^r \sum_{1 \leq i_1,\ldots,i_r \leq \lfloor n/2 \rfloor} \underset{\substack{D_k \in \MM_{p_k,i_k} \\ \forall 1\leq k \leq r}}{\sum \dots \sum}\prod_{1\leq m\leq r} 
        \mu(D_m) \cdot \PP_{\calP}(D_1^2|F_{p_1},\ldots,D_r^2|F_{p_r}).
    \end{align*}
\end{proposition}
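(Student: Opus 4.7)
The plan is to derive the identity directly from the multiplicative expansion of $\eta_{\calQ}$ recorded in \eqref{eq:expandeta} and then take expectation with respect to $\PP_{\calP}$. Since everything in sight is a finite sum, no analytic control is needed: the argument is essentially bookkeeping.

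First I would apply \eqref{eq:expandeta} to the set $\calQ = \{p_1,\ldots,p_r\}$, which gives
\[
\eta_{\calQ}(\bF)  = (-1)^r \prod_{m=1}^{r} \sum_{\substack{D_m^2 \mid F_{p_m}\\ D_m\neq 1}} \mu_{p_m}(D_m),
\]
and then expand the product of sums into a single multi-sum, indexed by tuples $(D_1,\ldots,D_r)$ with $D_m \in \calM_{p_m}$, $D_m\neq 1$, and $D_m^2 \mid F_{p_m}$. Because $F_{p_m}\in \calM_{p_m,n}$, the condition $D_m^2 \mid F_{p_m}$ forces $1\leq \deg D_m \leq \lfloor n/2\rfloor$, so I may regroup the sum according to the degrees $i_m = \deg D_m$, obtaining
\[
\eta_{\calQ}(\bF) = (-1)^r \sum_{1\leq i_1,\ldots,i_r \leq \lfloor n/2 \rfloor}\ \underset{\substack{D_k \in \MM_{p_k,i_k} \\ \forall 1\leq k \leq r}}{\sum \dots \sum}\ \mathbbm{1}_{\{D_m^2 \mid F_{p_m}\ \forall m\}}\prod_{m=1}^{r}\mu_{p_m}(D_m).
\]

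Next I would take the expectation $\EE_{\calP}$ of both sides. Since the right-hand side is a finite linear combination of indicator functions of events of the form $\{D_1^2 \mid F_{p_1},\ldots,D_r^2 \mid F_{p_r}\}$ with coefficients $\prod_m \mu_{p_m}(D_m)$ that do not depend on $\bF$, linearity of expectation lets me swap $\EE_{\calP}$ past the sums, and by the definition \eqref{eq:P_P} of $\PP_{\calP}$ each indicator's expectation becomes exactly $\PP_{\calP}(D_1^2 \mid F_{p_1},\ldots,D_r^2 \mid F_{p_r})$. This yields the asserted formula.

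There is really no hard step here: the only things to check are that the indexing rewrite is valid (no tuple is double-counted and the degree bound $\lfloor n/2 \rfloor$ is forced by divisibility), and that the finite-sum interchange of $\EE_{\calP}$ and $\sum$ is legitimate, both of which are routine. If one wanted to give an alternative derivation avoiding \eqref{eq:expandeta}, one could instead apply Lemma~\ref{lm:mu=0 formula} iteratively: conditioning on the first $r-1$ coordinates and applying the lemma to the variable $F_{p_r}$ with the event $E = \bigcap_{m<r}\{\mu_{p_m}(F_{p_m})=0\}$, and then inducting on $r$. Both routes give the same identity.
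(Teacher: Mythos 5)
Your proof is correct and is essentially the paper's argument: the paper invokes Lemma~\ref{lm:mu=0 formula} inductively (the route you mention as an alternative), which when unwound is exactly your direct expansion of \eqref{eq:expandeta} followed by regrouping by degree and taking expectation by linearity. Your degree bound $1\leq \deg D_m\leq \lfloor n/2\rfloor$ from $D_m\neq 1$ and $D_m^2\mid F_{p_m}$ with $\deg F_{p_m}=n$ is the same justification the paper uses implicitly.
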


\begin{proof}
    Since $\mathbb{E}_{\calP}(\eta_{Q}) = \mathbb{P}_{\calP}(\mu_{p_1}(f_{p_1})=0,\ldots, \mu_{p_r}(f_{p_r})=0)$, we may 
    apply Lemma~\ref{lm:mu=0 formula} inductively, to conclude the proof. 
\end{proof}

\subsection{Proof of Proposition~\ref{prop:general_result}}
\label{proof:Prop}
We choose $C$ to be sufficiently large depending only on $\alpha$, $\gamma$, and $c$, with exact value to be determined in the course of the proof.
Let $s=\#\calP$ and $\nu=\min \calP$. 

The condition $\disc(f)=\square$ implies that $\disc(f_p)=\square$ for all $p\in \calP$, since $\disc(f_p) \equiv \disc(f)\mod p$. Recalling  \eqref{eq:P_P} and writing $\bF = (F_p)_{p\in \calP}\in \mathcal{M}_{\calP,n}$, we get  that 
\begin{equation}\label{eq:bound_prob}
\begin{split}
    \mathbb{P}(\disc (f) = \square) 
        &\leq \PP_\calP\Big(\bigcap_{p \in \calP} \disc(F_p) = \square\Big) = \mathbb{E}_{\calP}\Big(\prod_{p\in \calP}\mathbbm{1}_{\disc(F_p)=\square}\Big)\\ 
        &\stackrel{\eqref{cor:sq_disc_indicator}}{=} 2^{-s}\mathbb{E}_\calP\Big( \prod_{p\in \calP} (1+(-1)^{n} \mu_p(F_p) + \mathbbm{1}_{\mu_p(F_p) = 0})\Big)\\
        &\stackrel{\eqref{eq:big_mueta}}{=} 2^{-s}\Big( \sum_{\calQ \subseteq \calP} \mathbb{E}_\calP (\eta_{\calQ}) + \sum_{\emptyset \neq \calQ \subseteq \calP} (-1)^{n|\calQ|}\mathbb{E}_\calP(\mu_{\calQ} \sum_{\mathcal{R} \subseteq \calP \setminus \calQ } \eta_{\mathcal{R}}) \Big).
\end{split}
\end{equation}
We bound the first summand by Proposition~\ref{prop:not_sq_free_eq} and \ref{cond:sq_divisors}: 
\begin{equation}\label{eq:boundoneta}  
    \Big|\sum_{\calQ \subseteq \calP}\mathbb{E}_\calP (\eta_{\calQ})\Big|=\sum_{\calQ \subseteq \calP}\mathbb{E}_\calP (\eta_{\calQ}) \leq \sum_{\calQ \subseteq \calP} \prod_{p\in \calQ} \omega(p)^{-1} = \prod_{p\in \calP}(1+\omega(p)^{-1}).
\end{equation}

Let  $\mathfrak S:= 2^{-s}\left|\sum_{\emptyset \neq \calQ \subseteq \calP} (-1)^{n|\calQ|}\mathbb{E}_\calP(\mu_{\calQ} \sum_{\mathcal{R} \subseteq \calP \setminus \calQ } \eta_{\mathcal{R}})\right|$. We have 
\[
    \left|\sum_{\mathcal{R} \subseteq \calP \setminus \calQ} \eta_\mathcal{R}(\bF)\right|=\sum_{\mathcal{R} \subseteq \calP \setminus \calQ} \eta_\mathcal{R}(\bF) \leq 2^{s-|\calQ|}.
\]
Hence,
\begin{equation}\label{eq:bd_S}
    \mathfrak S \leq \sum_{\emptyset \neq \calQ \subseteq \calP} {2^{-|\calQ|}}  \left|\mathbb{E}_{\calP}( \mu_\calQ)
    \right|.
\end{equation}
To this end, fix $\emptyset\neq \calQ \subseteq \calP$. By \eqref{lm:Parseval}, 
\begin{equation}\label{eq:afterParseval}
    \mathbb{E}_{\calP}( \mu_\calQ) = \frac{1}{P^n} \sum_{\bG \in \MM_{\calP,n}} \widehat{\PP}_{\calP}(T^{-n}\bG) \widehat{\mu}_\calQ(-T^{-n}\bG).
\end{equation}
Expanding $\widehat{\mu}_{\calQ}$ by definition gives
\begin{multline*}
    \widehat{\mu}_\calQ(-T^{-n}\bG) =  \sumP{\bH} \mu_\calQ(\bH) \be(-T^{-n}\bG \bH) \\
    = \prod_{p \in \calQ} \ps{\sum_{H_p \in \MM_{p,n}} \mu_p (H_p) e(\psi_p(-T^{-n} G_p H_p))} \cdot \prod_{p \in \calP \setminus \calQ} \ps{\sum_{H_p \in \MM_{p,n}} e(\psi_p(-T^{-n} G_p H_p))}.
\end{multline*}
By \eqref{eq:ortho_charac}, the product on the right vanishes unless $G_p=T^n$ for all $p\in \calP\setminus \calQ$, in which case it equals $P^n/Q^n$, where $Q=\prod_{p\in \calQ}p$. Plugging this in \eqref{eq:afterParseval} gives
\begin{equation}\label{eq:upsilon_calculation}
    \EE_{\calP}(\mu_\calQ) = \frac{1}{Q^n} \sum_{\substack{\bG \in \MM_{\calP,n} \\ \forall p \notin\calQ:\ G_p = T^n}} \widehat{\PP}_{\calP}(T^{-n} \bG) \prod_{p \in \calQ} \widehat{\mu}_p(-T^n G_p).
\end{equation}

Let $\delta = \frac{\gamma}{\gamma-1} > 4$, with $\delta=\infty$ if $\gamma=1$, so that $\frac{1}{\gamma}+\frac1\delta=1$. Applying H\"{o}lder's inequality to \eqref{eq:upsilon_calculation} gives that 
\begin{equation}\label{eq:bound_Upsilon}
    |\EE_\calP(\mu_{\calQ})| \leq \frac{1}{Q^n} \Big(
    \sum_{\substack{\bG \in \MM_{\calP,n} \\ \forall p \notin\calQ:\ G_p = T^n}} |\widehat{\PP}_\calP(T^{-n} \bG)|^\gamma
    \Big)^{\frac{1}{\gamma}} 
    \Big(
    \sum_{\substack{\bG \in \MM_{\calP,n} \\ \forall p \notin\calQ:\ G_p = T^n}} \prod_{p \in \calQ} |\widehat{\mu}_p(-T^n G_p)|^{\delta}
    \Big)^{\frac{1}{\delta}}.
\end{equation}
The first term is at most $\alpha^n$ by \ref{cond:fourier_norm_bd}. 
By the choice of $c$, we have that $\frac{1}{4}-\frac{1}{\delta}-c>0$. Hence we may take $\varepsilon_0=\varepsilon_0(\gamma,\varepsilon)>0$ so small such that $\frac{1}{4}-\frac{1}{\delta}-\varepsilon_0>c$. Applying Theorem~\ref{thm:mu_exp_sum} with $\varepsilon_0$ yields $q_0=q_0(\gamma,c)$ such that if $p>q_0$, then $\|\widehat{\mu}_p\|_{\infty}\leq p^{(3/4+\varepsilon_0)n}$. So,
\[
    |\EE_{\calP}(\mu_\calQ)| \leq \frac{\alpha^n}{Q^n} 
    \Big(Q^n \cdot \prod_{p\in\calQ} p^{(3/4+\varepsilon_0)n\delta}
    \Big)^{\frac{1}{\delta}} \leq Q^{-un},
\]
where $u=\frac14-\frac{1}{\delta} -\varepsilon_0-\frac{\log\alpha}{\log Q}$. As $Q\geq p\geq C$, if we take $C\geq q_0$ and to be sufficiently large so that $u>c$,
then 
\[
    |\EE_{\calP}(\mu_\calQ)| \leq Q^{-c n}.
\]
We plug this into \eqref{eq:bd_S} to get 
\begin{align*}
    \mathfrak{S} \leq \sum_{\emptyset \neq \calQ \subseteq \calP}2^{-|\mathcal{Q}|} Q^{-c n}  = \prod_{p\in\calP}\left(1+\frac{1}{2p^{c n}}\right) - 1 .
\end{align*} 
Plugging this and \eqref{eq:boundoneta} into \eqref{eq:bound_prob} finishes the proof.
\qed

\subsection{Proof of Proposition~\ref{thm:result_over_Fq}}
    Similarly to the proof of Proposition~\ref{prop:general_result}, we calculate the probability by
    \begin{equation}\label{eq:bound_prob_finite_field}
\begin{split}
    \mathbb{P}(\disc (F) = \square) 
        &= \mathbb{E}_q \Big( \mathbbm{1}_{\disc(F)=\square}\Big)\\ 
        &\stackrel{\eqref{cor:sq_disc_indicator}}{=} \frac{1}{2} \mathbb{E} \Big(1+(-1)^{n} \mu_q(F) + \mathbbm{1}_{\mu(F) = 0}\Big)\\
        &= \frac{1}{2} + \frac{(-1)^n}{2} \mathbb{E}(\mu_q(F)) + \frac{1}{2} \mathbb{E} (\mathbbm{1}_{\mu_q(F) = 0}) 
\end{split}
\end{equation}
    We bound each of the terms separately. Similarly to \eqref{eq:bound_Upsilon}, by H\"{o}lder's inequality we have
    \begin{equation*}
        |\EE(\mu_q)| \leq \frac{1}{q^n} \Big(
    \sum_{G \in \MM_{q,n}} |\widehat{\PP}(T^{-n} G)|^\gamma
    \Big)^{\frac{1}{\gamma}} 
    \Big(
    \sum_{G \in \MM_{q,n}} |\widehat{\mu_q}(-T^n G)|^{\delta}
    \Big)^{\frac{1}{\delta}}.
    \end{equation*}
    The first term is at most $\alpha^n$ by Condition \ref{cond: finite field 1}. 
    In a similar manner as in \eqref{eq:bound_Upsilon}, we get that if we pick $C$ sufficiently large  relatively to $\alpha, \gamma$ and $c$, then 
    \[
        |\EE(\mu_q)| \leq q^{-c n}.
    \]
    
    For the second term, using Lemma~\ref{lm:mu=0 formula} and Condition~\ref{cond:divisor_cond_fq}, we have
    \begin{align*}
         \left | \mathbb{E}_q (\mathbbm{1}_{\mu_q(F) = 0}) \right |
         =  \left | \sum_{i=1}^{\lfloor n/2 \rfloor} \sum_{D \in \MM_i} \mu_q(D) \cdot \PP(D^2|F) \right | \leq \omega_q^{-1},
    \end{align*}
    which completes the proof.
\qed

\section{Proof of the main theorems}

\subsection{Theorem~\ref{th:uniform_measures} implies Theorem~\ref{thm:Gal_Sn}}
\label{sec:proofthm1}
We prove that Theorem~\ref{th:uniform_measures} implies a slightly more general version of Theorem~\ref{thm:Gal_Sn}:
\begin{theorem}\label{thm:Gal_Sn'}
        Let $L,n$ be positive integers, let $a \in \ZZ$ and let
    \begin{equation*}
        f = X^n + \sum_{k=0}^{n-1} \zeta_k X^k
    \end{equation*}
    be a random polynomial, where $\zeta_k$ are chosen independently and identically distributed, taking values uniformly in  $[a+1,a+L]\cap \mathbb{Z}$. Then, 
    \[
        \lim_{L\to \infty}\mathbb{P}(G_f=S_n) = 1,
    \]
    uniformly on all pairs $(n,a)$ with $n
    \geq 1$, $a\in \mathbb{Z}$, and such that if $n^7>L$, then  $|a|\leq \frac12e^{n^{1/3}}$.
\end{theorem}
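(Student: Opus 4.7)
The plan is to split on the size of $L$ relative to $n$ and to combine the ingredients already assembled in the paper with a standard reduction. Starting from the elementary decomposition
\[
  \PP(G_f\neq S_n) \leq \PP(A_n\not\leq G_f) + \PP(G_f=A_n) \leq \PP(A_n\not\leq G_f) + \PP(\disc(f)=\square) + \PP(\disc(f)=0),
\]
where the first inequality uses that $A_n\leq G_f$ together with $G_f\neq A_n$ forces $G_f=S_n$ (since $A_n$ has index two in $S_n$), and the second uses the Stickelberger--Swan identity \eqref{cor:sq_disc_indicator} together with the easy fact that $\PP(\disc(f)=0)=O(1/L)$ as $L\to\infty$.

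When $L\geq n^7$, no constraint on $a$ is in force, and I would invoke Gallagher's large-sieve bound \eqref{GallagherThm} to obtain $\PP(G_f\neq S_n)\ll n^3\log L/\sqrt L\leq \log L/L^{1/14}=o(1)$, uniformly in $(n,a)$; the large-sieve argument depends only on the length of the coefficient box and is insensitive to its centre. In the complementary regime $L<n^7$ the hypothesis $|a|\leq \tfrac12 e^{n^{1/3}}$ applies, and in particular $n>L^{1/7}\to\infty$ as $L\to\infty$. Here I would bound $\PP(\disc(f)=\square)$ directly by Theorem~\ref{th:uniform_measures}, taking for instance $\delta=\tfrac14$ and $\varepsilon=\tfrac{1}{16}$: both summands in that bound are then $o(1)$ as $L\to\infty$, the first because $\log L/\log\log L\to\infty$ and the second because the exponent $(\tfrac14-\varepsilon)n \geq \tfrac3{16} L^{1/7}$ tends to infinity while the base $\tfrac{2}{(1-\delta)\log L}$ tends to $0$. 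For the remaining probability $\PP(A_n\not\leq G_f)$, I would invoke the unconditional results of Bary-Soroker--Koukoulopoulos--Kozma \cite{bary2023irreducibility} (and the precursor \cite{Bary_Soroker_2020}), which give $\PP(A_n\leq G_f)\to 1$ as $n\to\infty$ for uniform distributions on integer intervals of sufficient length.

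The main obstacle I anticipate is verifying that this lower bound on $G_f$ extends uniformly to the shifted box $[a+1,a+L]\cap\ZZ$ in the range $|a|\leq \tfrac12 e^{n^{1/3}}$. The BSKK input depends on the reduction of the coefficient distribution modulo small primes, and one needs the total-variation distance between the uniform law on $[a+1,a+L]\bmod p$ and the uniform law on $[1,L]\bmod p$ to remain $O(p/L)$ uniformly in this range of $a$, so that the translation does not invalidate the small-prime Fourier estimates; the growth rate $e^{n^{1/3}}$ is comfortably within what is needed for this. A minor loose end is the finitely many small degrees $n\leq 8$ that are excluded from Theorem~\ref{th:uniform_measures}; these would be handled separately by the classical van der Waerden--Chela result for each fixed $n$, which gives $\PP(G_f=S_n)\to 1$ as $L\to\infty$.
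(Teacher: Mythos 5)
Your proposal follows the same route as the paper: Gallagher's bound \eqref{GallagherThm} when $L\geq n^7$, Theorem~\ref{th:uniform_measures} to bound $\PP(\disc f=\square)$ when $L<n^7$, and \cite{bary2023irreducibility} to bound $\PP(A_n\not\leq G_f)$. One clarification on the obstacle you flagged: the constraint $|a|\leq\frac12 e^{n^{1/3}}$ has nothing to do with total-variation distance of the mod-$p$ reductions (translation commutes with reduction mod $p$, so the mod-$p$ law of the shifted box is just a cyclic shift of the unshifted one, with identical deviation from uniform regardless of $a$); its purpose is to guarantee $[a+1,a+L]\subseteq[-e^{n^{1/3}},e^{n^{1/3}}]$ for large $L$, which is precisely Condition~(a) of \cite[Theorem~8]{bary2023irreducibility}, while Condition~(b) (a mod-$210$ condition) is automatic for $L\geq 33{,}730$; that theorem then gives $\PP(A_n\not\leq G_f)=O(n^{-c})$ with absolute $c>0$. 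Your separate treatment of $n\leq 8$ is also unnecessary: once $L>8^7$, any $n\leq 8$ falls in the regime $L\geq n^7$ where Gallagher already applies, so no pairs with $n\leq 8$ survive in the complementary case as $L\to\infty$.
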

Theorem~\ref{thm:Gal_Sn} follows from Theorem~\ref{thm:Gal_Sn'} immediately by replacing $L$ by $2L+1$ and setting $a=-L$.

    \begin{proof}[Proof of Theorem~\ref{thm:Gal_Sn'}]
        Let $f$ be as in Theorem~\ref{thm:Gal_Sn} and let 
    $p=\mathbb{P}(G_f\neq S_n)$. We need to prove that $p\to 0$ as $L\to \infty$ uniformly on $(n,a)\in \{ (n,a): n^7>L\Rightarrow |a|\leq \frac{1}{2}e^{n^{1/3}}\}$. 
    By \eqref{GallagherThm}, $p\to 0$ uniformly as $L\geq n^7$.

    To this end, assume $n^7>L$. 
    Since $|a|\leq \frac{1}{2}e^{n^{1/3}}$, we get that $[a+1,a+L]\subseteq [-e^{n^{1/3}},e^{n^{1/3}}]$, for $L$ sufficiently large. Hence Condition (a) of \cite[Theorem 8]{bary2023irreducibility} is satisfied.
    Condition (b) is satisfied with $P=210$ since we may assume that $L\geq 33,730$ (the details appear in the proof of \cite[Theorem 1(a)]{bary2023irreducibility}). 
    Hence, we may apply \cite[Theorem 8]{bary2023irreducibility} to get that $\mathbb{P}(A_n\not\leq G_f) =O(n^{-c})$, with $c>0$ absolute.

    Finally, by Theorem~\ref{th:uniform_measures}, we get that, in this regime,
    \begin{multline*}
        p\leq \mathbb{P}(A_n\not\leq G_f) + \mathbb{P}(\disc f=\square)
        \\ \ll n^{-c} + 2^{-(\frac{1}{2}-\delta)  \frac{\log L}{\log \log L}} + \frac{\log L}{\log \log L} \ps{\frac{2}{(1-\delta)\log L}}^{(\frac14-\varepsilon) n} .
    \end{multline*}
    Thus, $p\to 0$ uniformly as  $n^7>L\to \infty$, and this concludes the proof. 
    \end{proof}

\subsection{Preliminaries for the proof of Theorem~\ref{th:uniform_measures}}
We will apply Proposition~\ref{prop:general_result} to prove Theorem~\ref{th:uniform_measures}. The following two lemmas are needed  to establish \ref{cond:fourier_norm_bd} and \ref{cond:sq_divisors}. 

We start with a simple bound:
Let $\xi$ be a random variable distributed uniformly on an interval $[a+1,a+L]\cap \mathbb{Z}$ of length $L$ and let $u,d\in \mathbb{Z}$ with $d>0$. Write $L=qd+r$ with $0\leq r<d$, then  
\[
    \mathbb{P}(\xi \equiv u \mod d) = \frac{\#\{ v\in [a+1,a+L]\cap \mathbb{Z} : v\equiv u\mod d\} }{L} =\frac{q+\alpha}{L},
\]
where $\alpha =\#\{ v\in [a+qd+1,a+L]:v\equiv u\mod d\}\in \{0,1\}$. Thus
\begin{equation}\label{eq:totaldeviation}
    \frac{1}{d}-\frac{1}{L}\leq \mathbb{P}(\zeta \equiv u \mod d) \leq \frac{1}{d}+\frac{1}{L}.
\end{equation}

\begin{lemma}\label{lm:L1_estimation}
    Let $f$ be as in Theorem~\ref{th:uniform_measures}, let $\calP$ be a finite set of primes, and let $P:=\prod_\calP p$. Then,
    \[
        \sum_{\bF \in \MM_{\calP,n}} |\widehat{\PP}_\calP(T^{-n}\bF)| \leq \ps{1+\frac{P(P-1)}{L}}^n.
    \]
\end{lemma}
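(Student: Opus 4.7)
The plan is to factor $\widehat{\PP}_\calP(T^{-n}\bF)$ as a product over the $n$ random coefficients $\zeta_k$, using independence together with the Chinese Remainder Theorem, and then to bound the resulting sum over $\bF$ by an elementary Fourier estimate on $\FF_\calP \cong \ZZ/P\ZZ$.

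First I would unwind $\be(T^{-n}\bF\bG)$ explicitly. Writing $F_p = T^n + \sum_{i<n} f_{p,i}T^i$ and $G_p = T^n + \sum_{j<n} g_{p,j}T^j$, the coefficient of $T^{n-1}$ in $F_pG_p$, which is precisely $\res_p(T^{-n}F_pG_p)$, equals $\sum_{i=0}^{n-1} f_{p,i}g_{p,n-1-i}$. Therefore
\[
\be(T^{-n}\bF\bG) = \prod_{k=0}^{n-1}\prod_{p\in\calP} e\!\left(\frac{f_{p,n-1-k}\,g_{p,k}}{p}\right) = \prod_{k=0}^{n-1}\chi_{k,\bF}(\gamma_k),
\]
where $\gamma_k \in \FF_\calP \cong \ZZ/P\ZZ$ is the CRT lift of $(g_{p,k})_{p\in\calP}$ and $\chi_{k,\bF} \in \widehat{\FF_\calP}$ is the character dual, via the pairing appearing on the right, to $(f_{p,n-1-k})_{p\in\calP}$. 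By independence of the $\zeta_k$'s and CRT, the push-forward satisfies $\PP_\calP(\bG) = \prod_{k=0}^{n-1} \lambda_k^{(P)}(\gamma_k)$, where $\lambda_k^{(P)}$ denotes the law of $\zeta_k \bmod P$ on $\FF_\calP$. Combining these two identities,
\[
\widehat{\PP}_\calP(T^{-n}\bF) = \prod_{k=0}^{n-1} \widehat{\lambda_k^{(P)}}(\chi_{k,\bF}),
\]
a product of Fourier coefficients on the finite abelian group $\FF_\calP$.

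Next I would observe that the map $\bF \mapsto (\chi_{k,\bF})_{k=0}^{n-1}$ is a bijection from $\MM_{\calP,n}$ onto $\widehat{\FF_\calP}^{\,n}$: for each $k$ the tuple $(f_{p,n-1-k})_{p\in\calP}$ ranges freely over $\FF_\calP$ as $\bF$ varies, and the pairing is non-degenerate. Hence summing over $\bF$ factors as
\[
\sum_{\bF\in\MM_{\calP,n}}|\widehat{\PP}_\calP(T^{-n}\bF)| = \prod_{k=0}^{n-1}\sum_{\chi\in\widehat{\FF_\calP}}\bigl|\widehat{\lambda_k^{(P)}}(\chi)\bigr|.
\]
For the trivial character the Fourier coefficient is $1$. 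For any non-trivial $\chi$, subtracting the vanishing uniform background $\sum_\gamma (1/P)\chi(\gamma) = 0$ gives
\[
\bigl|\widehat{\lambda_k^{(P)}}(\chi)\bigr| \leq \sum_{\gamma\in\FF_\calP}\bigl|\lambda_k^{(P)}(\gamma)-1/P\bigr| \leq P/L,
\]
the last bound being \eqref{eq:totaldeviation} applied with $d = P$ and summed over the $P$ residue classes. Since there are $P-1$ non-trivial characters, each inner sum is bounded by $1+P(P-1)/L$, and taking the product yields the claimed $(1+P(P-1)/L)^n$.

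The only non-routine step is the coefficient-by-coefficient bookkeeping that factors $\be(T^{-n}\bF\bG)$ into characters of $\FF_\calP$ compatibly with the independence decomposition of $\PP_\calP$. Once that is in place, the argument reduces to elementary Fourier analysis on a finite abelian group and the equidistribution estimate \eqref{eq:totaldeviation}.
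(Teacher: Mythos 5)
Your proof is correct and takes essentially the same route as the paper: factor $\widehat{\PP}_\calP(T^{-n}\bF)$ over the $n$ independent coefficients, use the centering trick (subtracting the uniform mass $1/P$) plus \eqref{eq:totaldeviation} to bound each non-trivial factor by $P/L$, and then multiply out. The paper phrases the bookkeeping via a count $k(\bF)$ of nonzero coefficient-positions and a binomial sum, whereas you recast it as a bijection $\bF\mapsto(\chi_{k,\bF})_k$ onto $\widehat{\FF_\calP}^{\,n}$ and factor the sum directly; these are the same estimate in different clothing.
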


\begin{proof}
    For a polynomial $H \in \mathbb{F}_p[T]$, we denote by $H^i$ its $i$-th coefficient, i.e. $H = \sum_{i=0}^{\deg H}H^{i}T^{i}$. Since $(\zeta_i)_{i=0}^{n-1}$ are independent, 
    \begin{align*}
        \left|\widehat{\PP}_\calP(T^{-n}\bF)\right| 
        &= 
        \left| \sumP{G} \PP_\calP(\bG) e_{\calP}(T^{-n}\bF \bG)\right| 
        \\ &= 
        \left| \sumP{G} \prod_{i=0}^{n-1} \PP(\zeta_i \equiv G_p^i \mod{p},\ \forall {p\in\calP} ) \prod_{p\in \calP}\prod_{i=0}^{n-1} e(\psi_p(G_p^i F_p^{n-1-i}))\right| 
        \\ &= 
        \prod_{i=0}^{n-1} \left|\sum_{(G_p^{i})_{p\in \calP} \in \FF_\calP} \PP( \zeta_i \equiv G^i_p \mod{p},\ \forall {p\in\calP}) \prod_{p\in\calP}e(\psi_p(G_p^i F_p^{n-1-i}))\right|.
    \end{align*}
    To this end, fix $0\leq i\leq n-1$. If $F_p^{n-1-i}=0$ for all $p$, then 
    \[
        \sum_{(G_p^{i})_{p\in \calP} \in \FF_\calP} \PP( \zeta_i \equiv G^i_p \mod{p},\ \forall {p\in\calP})\prod_{p\in\calP} e(\psi_p(G_p^i F_p^{n-1-i}))=1,
    \]
    as a sum over all probabilities. Otherwise, there exists a $p$ such that $F_p^{n-1-i}\neq 0$. 
    Hence,
    \begin{multline*}
        \left|\sum_{(G_p^{i})_{p\in \calP} \in \FF_\calP} \PP( \zeta_i \equiv G^i_p \mod{p},\ \forall {p\in\calP})\prod_{p\in\calP} e(\psi_p(G_p^i F_p^{n-1-i})) \right| \\
        \stackrel{\eqref{eq:ortho_charac}}{=}\Bigg|\sum_{(G_p^{i})_{p\in \calP} \in \FF_\calP} \PP( \zeta_i \equiv G^i_p \mod{p},\ \forall {p\in\calP})\prod_{p\in\calP} e(\psi_p(G_p^i F_p^{n-1-i})) \\
        - \sum_{(G_p^{i})_{p\in \calP} \in \FF_\calP} \frac{1}{P}\prod_{p\in\calP} e(\psi_p(G_p^i F_p^{n-1-i}))\Bigg|
        \stackrel{\eqref{eq:totaldeviation}}{\leq} 
        \frac{P}{L}
    \end{multline*}
    Writing $k(\bF)=\#\{0\leq i\leq n-1: \exists p\in\calP,\ F_p^{n-1-i}\neq 0\}$, we get that
    \[
        \prod_{i=0}^{n-1} \left|\sum_{(G_p^{i})_{p\in \calP} \in \FF_\calP} \PP( \zeta_i \equiv G^i_p \mod{p},\ \forall {p\in\calP}) \prod_{p\in\calP}e(\psi_p(G_p^i F_p^{n-1-i}))\right| \leq \left(\frac{P}{L}\right)^{k(\bF)}.
    \]
    Hence,
    \begin{align*}
        \sum_{\bF \in \MM_{\calP,n}} |\widehat{\PP}_{\calP}(T^{-n}\bF)| &\leq \sum_{k=0}^n \binom{n}{k} \ps{\frac{P}{L}}^k (P -1)^{k}
        = \ps{1+\frac{P(P-1)}{L}}^n,
    \end{align*}
    as needed.
\end{proof}

\begin{lemma}\label{lm:convenient cond3} Assume the setting of Proposition~\ref{prop:general_result}. 
    Let $h\colon\calP \to \mathbb{R}_{\geq 0}$
    a function such that $h(p)^2>p$,  for all $p \in \calP$. 
    Assume that for all $d | P$, $\alpha\in\ZZ$, and $k \geq 0$ we have $\PP_\calP(\zeta_k = \alpha \mod d) \leq \prod_{p\mid d}h(p)^{-1}$. Then, $\omega(p) = \frac{h(p)^2}{p}-1$ 
    satisfies \ref{cond:sq_divisors}.
\end{lemma}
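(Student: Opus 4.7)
The plan is to first establish the pointwise bound
\[
    \PP(D_1^2\mid f_{p_1},\ldots,D_r^2\mid f_{p_r})\leq \prod_{m=1}^r h(p_m)^{-2i_m}
\]
for every choice of $i_m\in\{1,\ldots,\lfloor n/2\rfloor\}$ and $D_m\in\MM_{p_m,i_m}$, and then to sum this estimate using $|\mu_{p_m}(D_m)|\leq 1$ and a geometric series.

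To get the pointwise bound, I would exploit the following structural fact about a single condition $D^2\mid f_p$ with $D\in\MM_{p,i}$ and $2i\leq n$: writing $f_p$ in the basis $\{X^j\}_{0\leq j<2i}$ of $\FF_p[X]/(D^2)$ yields $2i$ linear equations $\bar\zeta_j=-\sum_{k\geq 2i}\bar\zeta_k a_{k,j}$, one for each $j<2i$, so the condition pins down $\zeta_k\bmod p$ for $k<2i$ as explicit $\FF_p$-linear functions of $\zeta_k\bmod p$ for $k\geq 2i$ (with coefficients depending on $D$).

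Next I would condition on all $\zeta_k$ with $k\geq 2\max_m i_m$. For each remaining index $k$, let $I_k=\{m:2i_m>k\}$; the conjunction $\bigcap_m\{D_m^2\mid f_{p_m}\}$ specifies $\zeta_k\bmod p_m$ for every $m\in I_k$, so by CRT it is equivalent to a single congruence $\zeta_k\equiv v_k\pmod{d_k}$, where $d_k=\prod_{m\in I_k}p_m$ divides $P$ and $v_k$ is determined by the conditioned data and the $D_m$. Independence of the $\zeta_k$ combined with the hypothesis of the lemma then give the conditional bound
\[
    \prod_{k<2\max_m i_m}\,\prod_{m\in I_k}h(p_m)^{-1}=\prod_{m=1}^r h(p_m)^{-2i_m},
\]
and taking the outer expectation preserves this as an unconditional bound on $\PP(D_1^2\mid f_{p_1},\ldots,D_r^2\mid f_{p_r})$.

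Finally, the summation is formal: since $|\MM_{p,i}|=p^i$ and $h(p)^2>p$, each prime contributes a convergent geometric series
\[
    \sum_{i=1}^{\lfloor n/2\rfloor}p_m^i\,h(p_m)^{-2i}\leq \sum_{i=1}^\infty\bigl(p_m/h(p_m)^2\bigr)^i=\frac{1}{h(p_m)^2/p_m-1}=\omega(p_m)^{-1},
\]
and multiplying over $m$ yields condition \ref{cond:sq_divisors}. The delicate point will be the CRT decoupling in the previous step: one must check that, after conditioning on the high-degree coefficients, the residual congruences really factor across distinct indices $k$, so that the hypothesis $\PP(\zeta_k\equiv v_k\bmod d_k)\leq\prod_{p\mid d_k}h(p)^{-1}$ may be applied independently to each $k$. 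Everything else is bookkeeping.
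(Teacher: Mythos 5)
Your approach is essentially the paper's: the paper's entire argument is the sentence ``by assumption and the Chinese Remainder Theorem'' followed by the geometric-series computation, and you are filling in the justification for the implicit pointwise bound $\PP(D_1^2\mid f_{p_1},\ldots,D_r^2\mid f_{p_r})\leq\prod_m h(p_m)^{-2i_m}$. The structural observation (reduction mod $D_m^2$ expresses $\zeta_j\bmod p_m$ for $j<2i_m$ as an affine function of $\zeta_k\bmod p_m$ for $k\geq 2i_m$), the CRT merge across primes for each index $j$, and the final summation with $|\MM_{p,i}|=p^i$ are all exactly what the one-liner is standing in for.

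One point to tighten, which you correctly flag as the delicate step: conditioning only on $\zeta_k$ for $k\geq 2\max_m i_m$ does \emph{not} make the values $v_k$ deterministic. If $2i_m<2\max_{m'}i_{m'}$, then the residue that $\zeta_k\bmod p_m$ must take (for $k<2i_m$) depends on $\zeta_{k'}$ for $2i_m\leq k'<2\max_{m'}i_{m'}$, which are not yet fixed. The residual congruences therefore do not literally ``factor across distinct indices $k$'' after that single conditioning. The correct mechanism is a triangular chain-rule argument: the event $\bigcap_m\{D_m^2\mid f_{p_m}\}$ is identically $\bigcap_k\{\zeta_k\equiv V_k(\zeta_{k+1},\ldots,\zeta_{n-1})\ (\mathrm{mod}\ d_k)\}$ with $V_k$ measurable with respect to $\sigma(\zeta_{k+1},\ldots,\zeta_{n-1})$; integrating out $\zeta_0$, then $\zeta_1$, and so on, each conditional factor is bounded by $\max_v\PP(\zeta_k\equiv v\bmod d_k)\leq\prod_{m\in I_k}h(p_m)^{-1}$ uniformly over the realization of the higher coefficients, which yields the desired product. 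With this replacement the argument is complete and matches the paper's.
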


\begin{proof}
    By assumption and the Chinese Remainder Theorem
    \begin{multline*}
        \sum_{1 \leq i_1,\ldots,i_r \leq \lfloor n/2 \rfloor} \underset{\substack{D_k \in         \MM_{p_k,i_k} \\ \forall 1\leq k \leq r}}{\sum \dots \sum}\ 
            \PP(D_1^2|f_{p_1},\ldots,D_r^2|f_{p_r}) 
            \leq \sum_{1 \leq i_1,\ldots,i_r \leq \lfloor n/2 \rfloor} \prod_{k=1}^r \frac{p_k^{i_k}}{h(p_k)^{2i_k}}
            \\
            = \prod_{k=1}^{r} \sum_{i=1}^{\lfloor n/2 \rfloor} \left(\frac{p_k}{h(p_k)^{2}}\right)^i
            \leq \prod_{k=1}^{r} \sum_{i=1}^{\infty} \left(\frac{p_k}{h(p_k)^2}\right)^i
            \leq \prod_{k=1}^r \frac{\frac{p_k}{h(p_k)^2}}{1-\frac{p_k}{h(p_k)^2}}=\prod_{k=1}^r\omega(p_k)^{-1},
    \end{multline*}
    as needed.
\end{proof}

\subsection{Proof of Theorem~\ref{th:uniform_measures}}\label{sec:proofthm2}
    First we show that if $L$ is sufficiently large with respect to $\delta$ then the conditions of Proposition~\ref{prop:general_result} are satisfied with $\lambda_k$ uniformly distributed  on $[a+1,a+L]\cap \mathbb{Z}$, $\alpha=2$, $\gamma=1$, $c=\frac{1}{4}-\varepsilon$, and $\omega(p) = \frac{p-4}{4}$: 
    Let $\calP$ be the set of all primes  $\frac{1-\delta}{2} \log L < p \leq (1-\delta)\log L$ and $P=\prod_{p\in \calP} p$.
    Then, $\min\calP\geq \frac{1-\delta}{2} \log L$, and so \ref{cond:no_small_primes} holds true for $L$ sufficiently large.
    By \eqref{eq:chebyshev's pnt}, $L\geq P(P-1)$ if $L$ is sufficiently large, so 
     by Lemma~\ref{lm:L1_estimation}, we have
    \[
        \sum_{\bF \in \MM_{\calP,n}} |\widehat{\PP}_{\calP}(T^{-n}\bF)| \leq \ps{1+\frac{P(P-1)}{L}}^n \leq 2^n,
    \]
    Hence, \ref{cond:fourier_norm_bd} is satisfied.
    Let $h\colon \calP\to \mathbb{R}_{\geq 0}$ be the function defined by $h(p) = \frac{p}{2}$ so that $\omega(p)=\frac{h(p)^2}{p}-1$.
    For  $d \mid P$ and  $u \in \ZZ$, we have $d\leq P\leq L$, hence 
    \begin{equation*}\label{eq:Prob zeta_i=a mod p}
        \PP(\zeta_i = u \mod d) \stackrel{\eqref{eq:totaldeviation}}{\leq} \frac{1}{d} + \frac{1}{L} \leq \prod_{p \divides d}h(p)^{-1}.
    \end{equation*}
    This implies \ref{cond:sq_divisors} by Lemma~\ref{lm:convenient cond3}.

    Now we may apply Proposition~\ref{prop:general_result} to get that
    \begin{equation}\label{eq:bounddisc}
        \mathbb{P}(\disc(f) =\square) \leq \prod_{p\in \calP}\Big(1+2^{-1}p^{-(\frac14-\varepsilon) n}\Big)-1 + \frac{1}{2^{\# \calP}} \prod_{p\in \calP}(1+\omega(p)^{-1}).
    \end{equation}    
    By \eqref{cor:Mertens} and \eqref{eq:pnt}, we have
    \[
        \frac{1}{2^{\# \calP}} \prod_{p\in \calP}(1+\omega(p)^{-1})\ll    2^{- \frac{1-\delta}{2} \frac{\log L}{\log\log L}},  
    \]
    where the implied constant is absolute. 
    
    By \eqref{eq:boundingconvergent}, (note that $(\frac14 -\varepsilon)n>\frac{n}{8}>1$), we have
    \[
         \prod_{p\in \calP}(1+2^{-1}p^{-(1/4-\varepsilon) n})-1 \ll\frac{\log L}{(\frac{1-\delta}{2}\log L)^{(\frac14-\varepsilon) n}\log(\frac{1-\delta/2}{2}\log L)},
    \]
    where the implied constant is absolute. Since $\log(\frac{1-\delta/2}{2}\log L) = \log\log L+O(1)$, plugging the above bounds into \eqref{eq:bounddisc} completes the proof.\qed


\end{document}